\newtheorem{theorem}{Theorem}
\newtheorem{lemma}[theorem]{Lemma}
\newtheorem{corollary}[theorem]{Corollary}
\newtheorem{proposition}[theorem]{Proposition}
\newtheorem{obs}[theorem]{Observation} \newtheorem{defi}[theorem]{Definition}
\newtheorem{exa}[theorem]{Example}
\newtheorem{rem}[theorem]{Remark}
\newenvironment{remark}{\begin{rem}\rm}{\end{rem}}
\newtheorem{rems}[theorem]{Remarks}
\newtheorem{ack}[theorem]{Acknowlegment}
\def\P{\mathcal P}
\def\F{\mathcal F}
\def\B{\mathcal B}
\def\supp{{\rm supp}}
\def\NN{{\mathbf N}}
\def\ZZ{{\mathbf Z}}
\def\CCC{{\mathbf C}}
\def\RRR{{\mathbf R}}
\def\QQ{\mathbf Q}
\def\FF{\mathbf F}
\def\AA{{\mathbf A}}
\def\RR+{{\mathbf R}^*}
\def\TT{\mathbf T}
\def\KK{\mathbf K}
\def\kk{\mathbf k}
\def\PP{\mathbf P}
\def\Un{\mathbf 1}
\def\Q_p{{\mathbf Q}_p}
\def\SS{\mathbf S}
\def\Ga{\Gamma}
\def\ga{\gamma}
\def\Aut{{\rm Aut}}
\def\Aff{{\rm Aff}}
\def\paut{p_{\rm a}}
\def\supp{{\rm supp}}
\def\tout{\qquad\text{for all}\quad}
\begin{document}

\title[Spectral gap property for solenoids]{Spectral gap property and strong ergodicity for groups of affine transformations of solenoids}

\address{Bachir Bekka \\ Univ Rennes \\ CNRS, IRMAR--UMR 6625\\
Campus Beaulieu\\ F-35042  Rennes Cedex\\
 France}
\email{bachir.bekka@univ-rennes1.fr}
\author{Bachir Bekka and Camille Francini}
\address{Camille Francini \\ Univ Rennes \\ CNRS, IRMAR--UMR 6625\\
Campus Beaulieu\\ F-35042  Rennes Cedex\\
 France}
\email{camille.francini@ens-rennes.fr}


\thanks{The authors acknowledge the support  by the ANR (French Agence Nationale de la Recherche)
through the projects Labex Lebesgue (ANR-11-LABX-0020-01) and GAMME (ANR-14-CE25-0004)}
\begin{abstract}
Let $X$ be a solenoid, that is, a compact finite dimensional connected  abelian group
with normalized Haar measure $\mu$ and let $\Ga\to \Aff(X)$ be an action of a countable discrete group 
$\Ga$   by  continuous affine transformations of $X$.
We show that the probability measure preserving  action $\Ga\curvearrowright (X,\mu)$ does not have the  spectral gap property
if and only if  there exists a  $\paut(\Ga)$-invariant  proper subsolenoid $Y$  of $X$   such that the image of $\Ga$ in $\Aff(X/Y)$ is a virtually solvable group,  where $\paut: \Aff(X)\to \Aut(X)$ is the canonical projection.
  When $\Ga$ is finitely generated or when $X$ is the $a$-adic solenoid for an integer $a\geq 1,$
the subsolenoid $Y$ can be chosen so that the image $\Ga$ in $\Aff(X/Y)$ is a virtually abelian group.
In particular,  an action $\Ga\curvearrowright  (X,\mu)$ by affine transformations on a solenoid $X$  has 
the  spectral gap property  if and only if  $\Ga\curvearrowright  (X,\mu)$ is strongly ergodic.

\end{abstract}

\maketitle
\section{Introduction}
\label{S0}
Let $X$ be a compact group and $\Aut(X)$  the group of continuous automorphisms
of $X$.  Denote by 
$$\Aff(X):=\Aut (X)\ltimes X$$
 the group of affine transformations of $X$, that is, of maps   of the form 
 $$X\to X, \quad x\mapsto x_0\theta(x)$$
for some $\theta\in \Aut(X)$ and $x_0\in X.$
Let $\mu$ be the normalized Haar measure of $X.$ By translation invariance and uniqueness of the Haar measure,  every  transformation in $\Aff(X)$   preserves $\mu$.

Given a group $\Ga$  and a homomorphism $\Ga\to \Aff(X),$ one has therefore  a measure  preserving action   $\Ga\curvearrowright  (X,\mu).$
The study of the ergodicity of such actions is a classical theme  going back to Halmos
\cite{Halmos} and Kaplansky \cite{Kaplansky}, both for the case where $\Ga=\ZZ$ is generated by a single automorphism of $X.$ 
For a characterization of the ergodicity of an action 
$\Ga\curvearrowright  (X,\mu)$ by automorphisms on an arbitrary compact group, see \cite[Lemma 2.2]{KitchensSchmidt}.
The following elementary proposition provides a neat characterization of the ergodicity  for actions by affine
transformations in the case where $X$ is moreover abelian and connected (for the proof, see Subsection~\ref{SS-Pro-Ergodic}
below).
\begin{proposition} 
\label{Pro-Ergodic}
Let $X$ be a compact connected abelian group and $\Ga\subset  \Aff(X)$  a  countable group.
Let $\paut: \Aff(X)\to \Aut(X)$ denote the canonical projection. The following properties are equivalent:
\begin{itemize} 
\item[(i)] The action $\Ga\curvearrowright  (X,\mu)$ is not ergodic. 
 \item[(ii)] There exists a $\paut(\Ga)$-invariant  \emph{proper} and connected closed subgroup $Y$  
of $X$  such that the image of $\Ga$ in $\Aff(X/Y)$ is a finite group.
 \end{itemize}
\end{proposition}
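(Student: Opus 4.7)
The plan is to proceed via Fourier analysis on the compact abelian group $X$, using that $A := \widehat X$ is a discrete torsion-free abelian group (torsion-freeness being equivalent to connectedness of $X$). A direct computation shows that $\gamma=(\theta,x_0)\in\Aff(X)$ acts on $L^2(X,\mu)$ by permuting the one-dimensional character subspaces: concretely,
\[
\gamma\cdot\chi \;=\; \chi\bigl(\theta^{-1}(x_0^{-1})\bigr)\,(\chi\circ\theta^{-1}),
\]
so the underlying permutation of $A$ factors through $\paut(\Ga)$ via the dual action $\theta\cdot\chi=\chi\circ\theta^{-1}$.

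The direction (ii) $\Rightarrow$ (i) is routine: the hypothesis yields a finite-group action of the image of $\Ga$ on the non-trivial compact connected abelian group $X/Y$; any finite-group action on an uncountable compact connected abelian group fails to be ergodic (orbits are finite, so there are uncountably many invariant Borel sets), and this failure lifts to $X$ via the projection $X\to X/Y$.

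For (i) $\Rightarrow$ (ii), non-ergodicity furnishes a $\Ga$-invariant $f\in L^2(X,\mu)$ of mean zero with Fourier expansion $f=\sum_{\chi\neq 1}c_\chi\chi$. Substituting into $\gamma\cdot f=f$ and comparing coefficients shows that $|c_\chi|$ is constant on each $\paut(\Ga)$-orbit of $A$; the $\ell^2$ constraint then forces every orbit meeting $\supp f$ to be finite. I fix one such finite orbit $O$, put $B:=\langle O\rangle\subset A$, and define $Y:=B^\perp\subset X$. Then $Y$ is $\paut(\Ga)$-invariant and proper; moreover $\widehat{X/Y}=B$ is a finitely generated subgroup of the torsion-free group $A$, so $X/Y$ is a non-trivial torus and in particular connected. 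Since $\paut(\Ga)$ permutes the finite generating set $O$ of $B$, its image in $\Aut(X/Y)=\Aut(B)$ embeds into $\mathrm{Sym}(O)$ and is finite. Projecting $f$ onto $\bigoplus_{\chi\in O}\CCC\chi$ yields a non-zero $\Ga$-invariant vector $g=\sum_{\chi\in O}c_\chi\chi$ with every $c_\chi$ non-zero; if a pure translation $(1,\bar y)\in\Aff(X/Y)$ lies in the image of $\Ga$, it must fix $g$, and Fourier comparison forces $\chi(\bar y)=1$ for every $\chi\in O$. Since $O$ generates $\widehat{X/Y}$, this gives $\bar y=0$, so the image of $\Ga$ in $\Aff(X/Y)$ injects into the finite group $\Aut(X/Y)$ and is itself finite.

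The step demanding most care is the connectedness condition. Taking $B$ generated by a single finite $\paut(\Ga)$-orbit makes $\widehat{X/Y}=B$ finitely generated and torsion-free, so $X/Y$ is a connected torus, and it is the connectedness of this quotient that the finiteness argument really uses. The subgroup $Y$ itself may be totally disconnected --- already in the dyadic solenoid, $B=\ZZ\subset\ZZ[1/2]$ produces $Y\cong\ZZ_2$ --- so in cases where (ii) insists on $Y$ being connected one would instead take $Y':=\widetilde B^\perp$ for the saturation $\widetilde B$ of $B$ in $A$ and argue that the restriction map $\Aut(\widetilde B)\to\Aut(B)$ remains injective (because $A$ is torsion-free), preserving the finiteness of the image on the automorphism side.
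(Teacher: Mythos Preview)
Your Fourier-analytic approach coincides with the paper's: both pick a finite $\paut(\Ga)$-orbit $O\subset\widehat X\setminus\{0\}$ coming from a non-trivial invariant $L^2$-function, and take $Y$ to be the annihilator of a $\paut(\Ga)$-invariant subgroup of $\widehat X$ built from $O$. The only structural difference is ordering: you first use $B=\langle O\rangle$ (obtaining $X/Y$ a torus and a clean finiteness argument for the image of $\Ga$, but $Y$ possibly disconnected), and then propose to pass to the saturation $\widetilde B$; the paper saturates immediately by setting $W=$ $\QQ$-span of $O$ and $Y=(\widehat X\cap W)^\perp$.

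There is, however, a genuine gap in your saturation step, and it cannot be repaired. You note that $\Aut(\widetilde B)\to\Aut(B)$ is injective, which handles the automorphism side, but you say nothing about the translation side --- and that side fails. Knowing $\chi(x)=1$ for $\chi\in B=\langle O\rangle$ only yields $\chi(x)^n=1$ for $\chi\in\widetilde B$ with $n\chi\in B$, not $\chi(x)=1$. Concretely: let $X=\SS_2$ (so $\widehat X=\ZZ[1/2]$), let $x$ be a nonzero element of the closed subgroup $\ZZ^\perp\cong\ZZ_2$, and let $\Ga$ be generated by the translation by $x$. Then $\chi_1(x)=1$, so the action is non-ergodic; here $O=\{1\}$, $B=\ZZ$, $\widetilde B=\ZZ[1/2]$, $Y'=\widetilde B^\perp=\{0\}$, and the image of $\Ga$ in $\Aff(X/Y')=\Aff(X)$ is infinite cyclic. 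Since the only connected closed subgroups of the one-dimensional solenoid $\SS_2$ are $\{0\}$ and $\SS_2$, condition (ii) with $Y$ required to be \emph{connected} actually fails in this example.

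The paper's own proof has the identical gap: it asserts that $\chi(x)=1$ on the orbit of $\chi_0$ ``and hence for every $\chi\in\widehat X\cap W$'', which is precisely the unjustified passage from $\langle O\rangle$ to its saturation. Your unsaturated argument with $B=\langle O\rangle$ is correct and is what the method actually delivers; the connectedness that both directions genuinely use is that of $X/Y$, which is automatic since $X$ is connected.
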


Our main concern in this article is the spectral gap property for the action $\Ga\curvearrowright  (X,\mu)$.
Let $\pi_X$ denote the corresponding Koopman   representation of $\Ga$ on $L^2(X,\mu)$.
Recall that  $\Ga\curvearrowright  (X,\mu)$
is ergodic  if and only if there is  no non-zero invariant vector in the $\pi_X(\Ga)$-invariant subspace
$L^2_0(X,\mu)=(\CCC\Un_X)^\perp$ of functions with zero mean.
The action  $\Ga\curvearrowright  (X,\mu)$ is said to have the \textbf{spectral gap property}
(or has a spectral gap) if  there are  not even  almost invariant vectors in $L^2_0(X,\mu),$ 
that is, there is no sequence of unit vectors
 $f_n$  in  $ L^2_0(X,\mu)$  such
 that $\lim_n\Vert \pi_X(\ga)f_n-f_n\Vert=0$ for all $\ga\in \Ga.$
 
Group actions on general probability spaces with the spectral gap property have an amazing range of applications ranging  from geometry and group theory to operator algebras and graph theory  (for an account on this property, see \cite{BekkaSG}).

Given a specific non abelian compact group $X$, there is in general no known characterization of
the countable subgroups $\Ga$ of $\Aff(X)$ such that $\Ga\curvearrowright  (X,\mu)$   has the spectral gap property:
indeed, it is usually a difficult problem to even find subgroups $\Ga$ of $X$ for which the action $\Ga$
by translations on $X$ has a spectral gap (for a recent result  in the case $X=SU(d)$, see  \cite{BourgainGamburd}).

We characterize below (Theorem~\ref{Theo1})  actions  by affine transformations $\Ga\curvearrowright  (X,\mu)$ with the spectral gap property for a solenoid  $X$, in the same spirit as the ergodicity characterization from Proposition~\ref{Pro-Ergodic}. This result (as well as Theorem~\ref{Theo2}\  below)  generalizes Theorem~5 in \cite{BachirYves},
where an analogous characterization was given for the case of a torus $X=\TT^d$  (see also \cite[Theorem 6.5]{FurmanShalom} for a partial result).

Recall that a \textbf{solenoid} $X$ is a finite-dimensional, connected, compact abelian group
(see \cite[\S 25]{HeRo--63}).
Examples of solenoids of dimension $d\geq 1$ include the  torus $\TT^d=\RRR^d/\ZZ^d$ as well as  the 
$p$-adic solenoid $\SS_p^d$ for where $p$ is  a prime integer (see Appendix to Chapter I in \cite{Robe--00})  or, more generally, the $a$-adic solenoid $\SS_a ^d$ for a positive integer $a$ (see below).
In some sense the largest $d$-dimensional solenoid is provided by the  solenoid $\AA^d/\QQ^d$, where $\AA$ is the ring of ad\`eles over $\QQ$ (see Subsection~\ref{SS-Dual}).

Here is our main result. Recall that, given a group property $\P,$  a group has  virtually $\P$ if it has a finite index subgroup with the property $\P.$ A subsolenoid of a solenoid $X$ is a closed and connected subgroup of $X.$

\begin{theorem}
\label{Theo1}
Let $X$ be a solenoid with normalized Haar measure $\mu$ and $\Ga$ a countable subgroup of $\Aff(X)$.
Let $\paut: \Aff(X)\to \Aut(X)$ denote the canonical projection.
 The following properties are equivalent:
\begin{itemize}
 \item [(i)] The action $\Ga\curvearrowright  (X,\mu)$  does not have  the spectral gap property.
 \item [(ii)] The action $\paut(\Ga)\curvearrowright  (X,\mu)$  does not have  the spectral gap property.
\item [(iii)]  There exists a $\paut(\Ga)$-invariant  \emph{proper subsolenoid} $Y$   of $X$  such that the image of $\Ga$ in $\Aff(X/Y)$ is an amenable group.
\item [(iv)] There exists a $\paut(\Ga)$-invariant  \emph{proper subsolenoid} $Y$   of $X$  such that the image of $\Ga$ in $\Aff(X/Y)$ is  a virtually solvable group.
\end{itemize}
\end{theorem}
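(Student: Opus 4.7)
The plan is to prove the cycle $(iv)\Rightarrow(iii)\Rightarrow(i)\Rightarrow(ii)\Rightarrow(iv)$. The first implication is immediate since every virtually solvable group is amenable. For $(iii)\Rightarrow(i)$, the subspace $L^2_0(X/Y,\mu_{X/Y})\hookrightarrow L^2_0(X,\mu)$ is closed and $\Ga$-invariant, and the $\Ga$-action on it factors through the amenable image $H\subseteq\Aff(X/Y)$ of $\Ga$. If $H\curvearrowright X/Y$ is not ergodic, a non-zero invariant mean-zero function gives a genuinely invariant vector; if it is ergodic, the Connes--Weiss theorem applies to the amenable group $H$ acting on the non-atomic space $X/Y$ and guarantees almost invariant unit vectors in $L^2_0(X/Y)\subseteq L^2_0(X)$.

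For $(i)\Rightarrow(ii)$ I would use Fourier analysis on $L^2(X)$. A direct computation shows that for $\gamma = (\theta,x_0) \in \Aff(X)$ the Koopman operator acts on a character $\chi$ by $\pi_X(\gamma)\chi = c_\gamma(\chi)\,(\theta^*\chi)$, where $c_\gamma(\chi) := \overline{\chi(\theta^{-1}(x_0))}$ lies on the unit circle; thus the affine action differs from the pure automorphism action only by a unimodular scalar on each one-dimensional character eigenspace. Given unit almost invariant vectors $f_n = \sum_\chi a_{n,\chi}\,\chi \in L^2_0(X)$ for the affine action, set $g_n := \sum_\chi |a_{n,\chi}|\,\chi$. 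These remain unit vectors in $L^2_0(X)$ (as $a_{n,1}=0$), and the coefficient-wise inequality $\bigl||z|-|w|\bigr|\le|z-w|$ applied term by term yields $\|\pi_X(\theta)g_n-g_n\|\le\|\pi_X(\gamma)f_n-f_n\|\to 0$ for every $\theta\in\paut(\Ga)$.

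The heart of the argument is $(ii)\Rightarrow(iv)$. Set $A := \widehat X$, a torsion-free abelian group of rank $d := \dim X$, and $V := A\otimes_\ZZ\QQ\cong\QQ^d$, so that $G := \paut(\Ga)$ embeds in $GL(V)\cong GL_d(\QQ)$. Under Pontryagin duality, $\paut(\Ga)$-invariant proper subsolenoids $Y\subsetneq X$ correspond to non-zero $G$-invariant $\QQ$-subspaces $W\subseteq V$ (via $W := \widehat{X/Y}\otimes\QQ$, the connectedness of $Y$ translating to torsion-freeness of $A/(W\cap A)$), and the induced action on $X/Y$ is identified with the $G$-action on $W\cap A = \widehat{X/Y}$. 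Fourier-decompose $L^2_0(X) = \bigoplus_\O \ell^2(\O)$ along the $G$-orbits $\O\subset A\setminus\{0\}$; failure of spectral gap forces some orbit representation $\ell^2(\O) = \mathrm{Ind}_H^G 1_H$, with $H := \mathrm{Stab}_G(\chi_0)$, to contain almost invariant vectors, meaning that $H$ is co-amenable in $G$ (i.e.\ $G/H$ carries a $G$-invariant mean). To convert this orbital amenability into an algebraic statement, push forward the Fourier coefficient measures $\nu_n(\chi) := |\widehat{f_n}(\chi)|^2$ to the projective spaces $\PP(V\otimes\QQ_v)$ for each completion $\QQ_v$ of $\QQ$ (archimedean and $p$-adic), weak-$*$ accumulate to obtain $G$-invariant probability measures, and apply a Furstenberg-type dichotomy to force the existence of a proper invariant subspace at each place. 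Descending these local invariance properties to a common $\QQ$-rational subspace $W$ and applying Tits' alternative to the linear image of $G$ in $GL(W)\subseteq GL_{d'}(\QQ)$ upgrades amenability to virtual solvability; the translation part contributes only an abelian extension, so the image of $\Ga$ in $\Aff(X/Y)$ is itself virtually solvable.

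The principal obstacle is exactly this last extraction of a rational invariant subspace. Whereas the torus argument of [BachirYves] can be carried out entirely over $\ZZ$ using only the archimedean place, the dual $\widehat X$ of a general solenoid is only a finite-rank torsion-free abelian group --- typically not finitely generated as a $\ZZ$-module, with non-trivial $p$-adic behaviour at infinitely many primes. One must therefore run the Furstenberg-type analysis at every completion of $\QQ$ and coordinate the resulting local invariant subspaces back to a single $\QQ$-rational invariant subspace of $V$. This adelic coordination is the technical core of the theorem.
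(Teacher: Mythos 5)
Your reductions around the cycle are fine and match the paper: (iv)$\Rightarrow$(iii) is trivial, (i)$\Rightarrow$(ii) via taking absolute values of Fourier coefficients is exactly the paper's argument, and (iii)$\Rightarrow$(i) is the standard fact that an ergodic p.m.p.\ action of an amenable group on a non-atomic space admits almost invariant vectors (this is due to Schmidt and del Junco--Rosenblatt, not Connes--Weiss, but the content is right). The problem is the core implication (ii)$\Rightarrow$(iv): what you offer there is a strategy statement, not a proof, and you yourself flag the decisive step (``adelic coordination'') as an open obstacle. Concretely, three things are missing or wrong. First, the claim that failure of spectral gap forces some single orbit representation $\ell^2(\O)\cong\ind_H^G 1_H$ to have almost invariant vectors is false: weak containment of the trivial representation in a direct sum does not localize to a summand (already for $\ZZ$ acting by characters tending to $1$). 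The correct move, which the paper makes, is to extract a $\Ga$-invariant mean $M$ on $\widehat X\setminus\{0\}$ from the vectors $|\widehat{f_n}|^2$ and work with $M$, not with any single orbit. Second, pushing $M$ to $\PP(V\otimes\QQ_p)$ and invoking a Furstenberg dichotomy at each place only produces, place by place, the subspaces $V_p(\Ga)$ on which the \emph{closure} of the image of $\Ga$ acts amenably; you give no mechanism to descend these (a priori unrelated) local subspaces to one $\QQ$-rational invariant $W$. The paper does this by showing $M(\widehat X\cap V_p(\Ga))=1$ for every place, using finite additivity to get nontrivial finite intersections $\widehat X\cap V_\infty(\Ga)\cap\bigcap_{i\le n}V_{p_i}(\Ga)$, and stabilizing the decreasing sequence of their $\QQ$-spans $W^n$.

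Third, and most seriously, even granted such a $W$, amenability of the closures of the image of $\Ga$ in each $GL(W\otimes\QQ_p)$ does \emph{not} imply that the image of $\Ga$ in $GL(W)$ is amenable as a discrete group (a free group can have compact, hence amenable, closure in $GL_d(\QQ_p)$ or $SO(3)$; amenability of a locally compact group passes only to \emph{closed} subgroups). This is exactly why the paper introduces the restricted product $GL(V_\AA)$ and the group $G=(G_\infty\times\prod_p G_p)\cap GL(V_\AA)$: $G$ is amenable as an increasing union of open subgroups that are (amenable)$\times$(compact), and the image of $\Ga$ in $GL(W)$, embedded diagonally, is \emph{discrete} in $G$, hence closed, hence amenable; only then does duality give the proper subsolenoid $Y=(\widehat X\cap W)^\perp$ with amenable image of $\Ga$ in $\Aff(X/Y)$, and Tits' alternative upgrades this to (iv). Since your write-up stops precisely before these steps --- which are the technical heart of the theorem, and which you explicitly acknowledge not having carried out --- the proposal has a genuine gap.
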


The proof of Theorem~\ref{Theo1} is  an extension to the adelic setting of the  methods from \cite{BachirYves} and is based
  on the consideration of appropriate invariant means on  finite dimensional vector spaces over local fields
  and the associated invariant measures on the corresponding projective spaces.
 
 \begin{remark}
 \label{Rem1}
 Theorem~\ref{Theo1} can be sharpened in the case where  $\Ga$ is a \emph{finitely generated}  subgroup $\Aff(X)$:
  the subsolenoid $Y$ in (iv) can be chosen so  that the image of $\paut(\Ga)$ in $\Aut(X/Y)$ is  virtually  abelian (see Remark~\ref{Rem3}).
 \end{remark}

The spectral gap property is related to  another strengthening
of ergodicity. Recall that the action of a  countable group $\Ga$
 by measure preserving transformations on a probability space $(X, \mu)$ is  \textbf{strongly ergodic} 
 (see \cite{Schmidt}) if every
 sequence $(A_n)_n$ of measurable subsets of  $X$ which is asymptotically invariant
 (that is,  which is such that  $\lim_n\mu(\ga A_n \bigtriangleup A_n)=0$ for all $\ga\in \Ga$)
is trivial (that is, $\lim_n\mu( A_n)(1-\mu(A_n))=0$).
As is easily seen,  the spectral gap property implies strong ergodicity (the converse 
implication does not hold in general; see Example 2.7 in  \cite{Schmidt}).
Moreover, no ergodic measure preserving action of an \emph{amenable} group on 
a  non atomic probability space is strongly ergodic, by   \cite[Theorem 2.4]{Schmidt}.
The following corollary  is therefore a direct consequence of Theorem~\ref{Theo1}.
\begin{corollary}
\label{Cor1}
Let $X$ be a solenoid and $\Ga\subset  \Aff(X)$  a  countable group.
 The following properties are equivalent:
\begin{itemize} 
\item[(i)] The action $\Ga\curvearrowright  (X,\mu)$
 has the spectral gap property. 
 \item[(ii)]  The action $\Ga\curvearrowright  (X,\mu)$ is strongly ergodic. 
 \end{itemize}
\end{corollary}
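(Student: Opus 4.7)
The forward implication (i)$\Rightarrow$(ii) is already noted in the paragraph preceding the statement: the spectral gap property always implies strong ergodicity, since from an asymptotically invariant sequence $(A_n)$ with $\mu(A_n)(1-\mu(A_n))$ bounded away from $0$ one produces the almost invariant unit vectors $(\Un_{A_n}-\mu(A_n))/\Vert\Un_{A_n}-\mu(A_n)\Vert_2$ in $L^2_0(X,\mu)$.

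For the converse (ii)$\Rightarrow$(i), I would argue by contraposition, using Theorem~\ref{Theo1} to convert the failure of spectral gap into a concrete factor action to which Schmidt's criterion applies. Assume $\Ga\curvearrowright(X,\mu)$ does not have the spectral gap property. By Theorem~\ref{Theo1}\,(iii) there is a $\paut(\Ga)$-invariant proper subsolenoid $Y\subsetneq X$ such that the image $H$ of $\Ga$ in $\Aff(X/Y)$ is amenable. Since $Y$ is a proper closed subgroup, the compact connected abelian quotient $X/Y$ is non-trivial, so its normalized Haar measure $\nu$ is non-atomic.

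I would then split according to whether the factor action $\Ga\curvearrowright(X/Y,\nu)$ is ergodic. If it is not ergodic, the preimage under the measure-preserving quotient map $q\colon X\to X/Y$ of a non-trivial $\Ga$-invariant Borel set is a non-trivial $\Ga$-invariant Borel subset of $X$, so $\Ga\curvearrowright(X,\mu)$ fails to be ergodic and, a fortiori, fails to be strongly ergodic. If instead it is ergodic, then this factor action is an ergodic probability measure preserving action of the amenable group $H$ on the non-atomic space $(X/Y,\nu)$, so by \cite[Theorem~2.4]{Schmidt} it is not strongly ergodic. Choosing a non-trivial asymptotically invariant sequence $(B_n)$ in $X/Y$, the sets $A_n:=q^{-1}(B_n)$ satisfy $\mu(A_n)=\nu(B_n)$ and $\mu(\ga A_n\bigtriangleup A_n)=\nu(\ga B_n\bigtriangleup B_n)\to 0$ for every $\ga\in\Ga$, yielding a non-trivial asymptotically invariant sequence in $X$. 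Hence $\Ga\curvearrowright(X,\mu)$ is not strongly ergodic.

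The whole argument is a short deduction from Theorem~\ref{Theo1} and Schmidt's criterion, so no real obstacle is expected. The only point requiring mild attention is the bookkeeping that separates the ergodic and non-ergodic cases of the factor action on $X/Y$ and the observation that measure preservation of $q$ transports both non-triviality and asymptotic invariance of subsets from $X/Y$ back to $X$.
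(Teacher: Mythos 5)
Your argument is correct and follows essentially the same route as the paper, which deduces the corollary directly from Theorem~\ref{Theo1} together with the two facts recalled just before the statement (spectral gap implies strong ergodicity, and Schmidt's theorem that an ergodic action of an amenable group on a non-atomic probability space is never strongly ergodic). Your split into the ergodic and non-ergodic cases of the factor action on $X/Y$ and the pullback of asymptotically invariant sequences merely makes explicit the details the paper leaves to the reader.
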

It is worth mentioning that the equivalence of (i) and (ii) in Corollary~\ref{Cor1} holds for actions by \emph{translations} 
on a general compact group $X$ (see Proposition~3.1 in \cite{Abert}).

We can prove improve   Theorem~\ref{Theo1} in the case of $a$-adic solenoids. Let $a$ be a square free positive integer, that is, $a=p_1\cdots p_r$ is a product of different primes $p_i.$
Then
$$ \AA_a^d:= \RRR^d\times\QQ_{p_1}^d\times\cdots\times \QQ_{p_r}^d,$$
is a locally compact ring, where $\QQ_p$ is the field of $p$-adic numbers.
Let $\ZZ[1/a]=\ZZ[1/p_1, \cdots, 1/p_r]$ denote the subring  of $\QQ$ generated by $1$ and $1/a$. 
Through the diagonal embedding 
$$
\ZZ[1/a]^d \to \AA_a^d, \qquad b\mapsto (b, b,\cdots, b),
$$
we may identify $\ZZ[1/a]^d$ with a  discrete and cocompact subring of $\AA_a^d.$
The  \textbf{$a$-adic solenoid} is defined as the quotient 
$$
\SS_a= \AA_a^d/ \ZZ[1/a]^d.
$$
(see Chap. II, \S 10 in \cite{HeRo--63}).
Moreover, $\Aut(\AA_a^d)$ is canonically isomorphic to $GL_d( \RRR) \times GL_d(\QQ_{p_1}) \times\cdots\times GL_d(\QQ_{p_r})$  and so  $\Aut(\SS_a^d)$ can be identified with   $GL_d(\ZZ[1/a])$.

For a subset $S$ of $GL_d(\KK)$ for a field $\KK,$ we denote by $S^t=\{g^t \mid g\in S\}$ the set 
of transposed matrices from $S$.
\begin{theorem}
\label{Theo2}
Let $a\geq 1$ be a square free integer.
Let $\Ga$ be a subgroup of $\Aff(\SS_a^d)\cong GL_d(\ZZ[1/a])\ltimes \SS_a^d.$
The following properties are equivalent:
\begin{itemize}
 \item [(i)] The action $\Ga\curvearrowright  (\SS_a^d,\mu)$  does not have  the spectral gap property.
 \item [(ii)]  There exists a  non zero linear subspace $W$  of $\QQ^d$ which is invariant under $\paut(\Ga)^t\subset GL_d(\QQ)$ and such that the image of $\paut(\Ga)^t$ in $GL(W)$ is a virtually abelian group.
\end{itemize}
\end{theorem}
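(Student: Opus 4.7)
The approach is to combine Theorem~\ref{Theo1} with Pontryagin duality for $\SS_a^d$ and a general algebraic fact about virtually solvable subgroups of $GL_n(\QQ)$.

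The first step is to set up the duality. The additive group $\AA_a^d$ is self-dual, and under a suitable choice of pairing the annihilator of the diagonal $\ZZ[1/a]^d$ is again $\ZZ[1/a]^d$, so that $\widehat{\SS_a^d}\cong \ZZ[1/a]^d$; under this identification, the action of $\paut(\Ga)\subset GL_d(\ZZ[1/a])$ on $\SS_a^d$ dualizes to the action of $\paut(\Ga)^t$ on $\ZZ[1/a]^d\subset \QQ^d$. Since $\ZZ[1/a]$ is a principal ideal domain, the subgroups of $\ZZ[1/a]^d$ with torsion-free quotient are exactly those of the form $W\cap \ZZ[1/a]^d$ for $\QQ$-subspaces $W\subset \QQ^d$, and taking annihilators yields a bijection between proper $\paut(\Ga)$-invariant subsolenoids $Y\subset \SS_a^d$ and nonzero $\paut(\Ga)^t$-invariant $\QQ$-subspaces $W\subset \QQ^d$. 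Under this bijection, the image of $\paut(\Ga)$ in $\Aut(\SS_a^d/Y)$ is naturally isomorphic to the image of $\paut(\Ga)^t$ in $GL(W)$.

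For (ii) $\Rightarrow$ (i), given $W$ with virtually abelian image, the corresponding subsolenoid $Y$ is proper and $\paut(\Ga)$-invariant, and the image of $\paut(\Ga)$ in $\Aut(\SS_a^d/Y)$ is virtually abelian. The image of $\Ga$ in $\Aff(\SS_a^d/Y)$ is then an extension of a virtually abelian group by an abelian group, hence virtually metabelian and in particular amenable, and Theorem~\ref{Theo1} (iii) $\Rightarrow$ (i) gives failure of the spectral gap property. Conversely, if (i) holds, Theorem~\ref{Theo1} provides a proper $\paut(\Ga)$-invariant subsolenoid $Y_0$ such that the image of $\Ga$ in $\Aff(\SS_a^d/Y_0)$ is virtually solvable; since this image is an extension of the image of $\paut(\Ga)$ in $\Aut(\SS_a^d/Y_0)$ by an abelian group, the latter image is itself virtually solvable. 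The duality then gives a nonzero $\paut(\Ga)^t$-invariant subspace $W_0\subset \QQ^d$ on which $\paut(\Ga)^t$ acts as a virtually solvable group.

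The main obstacle is to upgrade this from virtually solvable to virtually abelian, by the following algebraic lemma: if $H\subset GL(V)$ is a virtually solvable subgroup of automorphisms of a finite-dimensional $\QQ$-vector space $V$, then there exists a nonzero $H$-invariant $\QQ$-subspace $V'\subset V$ on which $H$ acts as a virtually abelian group. The proof proceeds by picking a finite-index solvable subgroup $H_0\subset H$ and a minimal nonzero $\QQ$-subspace $V'\subset V$ invariant under $H_0$, so that $H_0$ acts $\QQ$-irreducibly on $V'$. Letting $G$ be the Zariski closure of $H_0|_{V'}$ in $GL(V')$, a solvable $\QQ$-algebraic group, the derived subgroup $[G^0,G^0]$ of its identity component is closed, normal in $G$, and unipotent; its fixed vector set in $V'$ is a nonzero (by Kolchin's theorem) $\QQ$-subspace that is $G$-invariant and hence $H_0$-invariant, so by $\QQ$-irreducibility it equals all of $V'$. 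Thus the image of $G^0$ in $GL(V')$ is abelian, so $H_0\cap G^0(\QQ)$ has finite index in $H$ with abelian image in $GL(V')$, making $H|_{V'}$ virtually abelian. Applying the lemma to $H=\paut(\Ga)^t|_{W_0}$ then produces the subspace $W$ required in (ii), completing the proof of (i) $\Rightarrow$ (ii).
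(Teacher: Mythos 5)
Your route is genuinely different from the paper's. The paper does not deduce Theorem~\ref{Theo2} from the virtually solvable conclusion of Theorem~\ref{Theo1}: it reruns the invariant-mean argument directly on $\widehat{\SS_a^d}\cong\ZZ[1/a]^d$, takes a $\QQ$-subspace $W$ of minimal dimension with $M(W)=1$, applies Proposition~\ref{Pro-Furstenberg} at each of the finitely many places $p_1,\dots,p_r,\infty$, and then uses the discreteness of $GL_d(\ZZ[1/a])$ in $GL_d(\RRR)\times\prod_{i}GL_d(\QQ_{p_i})$ to conclude that a finite-index subgroup of the image of $\Ga$ in $GL(W)$ has \emph{finite} commutator subgroup, finishing with Lemma~\ref{Lem-LinearVirtually}. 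You instead invoke Theorem~\ref{Theo1}\,(iv), dualize (your description of the correspondence between proper $\paut(\Ga)$-invariant subsolenoids and nonzero $\paut(\Ga)^t$-invariant $\QQ$-subspaces matches Subsections~\ref{SS-Dual} and~\ref{SS-DualSol}; the PID property of $\ZZ[1/a]$ is not even needed, only saturation), and then upgrade virtually solvable to virtually abelian on a smaller invariant subspace by a purely algebraic Lie--Kolchin/Kolchin argument. The algebraic core is sound: on a minimal $H_0$-invariant subspace $V'$ the Zariski closure $G$ of $H_0|_{V'}$ is solvable, $[G^0,G^0]$ is unipotent and defined over $\QQ$, its fixed space is a nonzero $G$-invariant $\QQ$-subspace, so minimality forces $G^0$ to be abelian and $H_0|_{V'}$ to be virtually abelian; this is close in spirit to the Zariski-closure argument in Lemma~\ref{Lem-LinearVirtually}, but replaces the finite-by-abelian input by solvability. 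What each approach buys: the paper's direct proof stays at the finitely many places of $\ZZ[1/a]$, avoids Tits' alternative for this implication, and yields the extra structural fact that a finite-index subgroup of the image has finite commutator; your reduction is shorter granting Theorem~\ref{Theo1} and, once completed, would in fact prove the refinement of Remark~\ref{Rem1} (a quotient on which $\paut(\Ga)$ acts virtually abelianly) for an arbitrary solenoid and arbitrary countable $\Ga$, without the finite generation or $a$-adic hypothesis.

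There is, however, one gap you must close: your lemma asserts an $H$-invariant subspace, but your proof only produces an $H_0$-invariant one. The minimal subspace $V'$ is chosen invariant under the finite-index solvable subgroup $H_0$, so the concluding phrase ``making $H|_{V'}$ virtually abelian'' is not even well defined, and in the application the subspace $W$ of Theorem~\ref{Theo2}\,(ii) must be invariant under all of $\paut(\Ga)^t$, not merely under a finite-index subgroup. The standard repair: replace $H_0$ by a normal finite-index solvable subgroup (the intersection of its conjugates), and set $W=\sum_{h\in H/H_0}hV'$. By normality each translate $hV'$ is $H_0$-invariant, and the image of $H_0$ in $GL(hV')$ is isomorphic to the image of $h^{-1}H_0h=H_0$ in $GL(V')$, hence virtually abelian; since $W$ is spanned by the finitely many $hV'$, the image of $H_0$ in $GL(W)$ embeds into the product of these images and is therefore virtually abelian, and so is the image of $H$, which contains it with finite index. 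With this patch your argument is correct.
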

Examples of group actions  on  solenoids  with the spectral gap property are provided by the following immediate consequence of Theorem~\ref{Theo2}.
\begin{corollary}
\label{Cor2}
For a square free integer $a\geq 1,$ let $\Ga$ be subgroup of $GL_d(\ZZ[1/a])$. Assume that $\Ga$ is not virtually abelian  and that  $\Ga$ acts irreducibly on $\QQ^d$.
 Then the action of $\Ga$ by automorphisms of $\SS_a^d$ has the spectral gap property.
\end{corollary}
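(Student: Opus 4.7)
The plan is to prove the corollary by contrapositive, reducing it directly to Theorem~\ref{Theo2}. Suppose the action of $\Ga$ by automorphisms of $\SS_a^d$ does not have the spectral gap property. Since $\Ga \subset \Aut(\SS_a^d)=GL_d(\ZZ[1/a])$ contains no translations, we have $\paut(\Ga)=\Ga$. By Theorem~\ref{Theo2} (ii), there exists a non-zero linear subspace $W$ of $\QQ^d$ that is invariant under $\Ga^t \subset GL_d(\QQ)$ and such that the restriction $\Ga^t|_W$ in $GL(W)$ is virtually abelian.

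The key observation is a duality between $\Ga$-invariant subspaces and $\Ga^t$-invariant subspaces via the orthogonal complement. More precisely, for any subspace $V\subset \QQ^d$, the annihilator $V^\perp$ with respect to the standard bilinear form satisfies: $V$ is $\Ga^t$-invariant if and only if $V^\perp$ is $\Ga$-invariant. Indeed, $\langle \ga^t w, v\rangle=\langle w, \ga v\rangle$, so $\Ga^t W\subset W$ forces $\Ga W^\perp\subset W^\perp$, and conversely. Consequently, the irreducibility of the $\Ga$-action on $\QQ^d$ is equivalent to the irreducibility of the $\Ga^t$-action on $\QQ^d$.

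Given our hypothesis that $\Ga$ acts irreducibly on $\QQ^d$, it follows that $\Ga^t$ has no proper non-zero invariant subspace in $\QQ^d$. The subspace $W$ provided by Theorem~\ref{Theo2} must therefore equal $\QQ^d$, and we conclude that $\Ga^t$ itself is a virtually abelian subgroup of $GL_d(\QQ)$. The anti-isomorphism $g\mapsto g^{-t}$ is a group isomorphism $\Ga\to \Ga^t$, so $\Ga$ is virtually abelian as well, contradicting the hypothesis. Hence the action of $\Ga$ on $(\SS_a^d,\mu)$ has the spectral gap property.

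I do not anticipate any real obstacle here: the argument is a straightforward contrapositive application of Theorem~\ref{Theo2}, and the only substantive step is the transpose-duality remark showing that irreducibility is preserved under passing from $\Ga$ to $\Ga^t$, which is a one-line bilinear pairing argument.
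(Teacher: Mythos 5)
Your argument is correct and is exactly the intended deduction: the paper states Corollary~\ref{Cor2} as an immediate consequence of Theorem~\ref{Theo2}, and your contrapositive (irreducibility forces $W=\QQ^d$, hence $\Ga^t$, and so $\Ga$, would be virtually abelian) is that deduction spelled out. The transpose-duality remark, that $\Ga$-irreducibility of $\QQ^d$ passes to $\Ga^t$ via annihilators under the standard pairing, is the only step the paper leaves implicit, and you handle it correctly.
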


\begin{remark} 
\label{Rem2}
Corollary~\ref{Cor2} generalizes Theorem~6.8 in \cite{FurmanShalom}  in which the same result is proved under the stronger assumption that $\Ga$ acts irreducibly on $\RRR^d$.
\end{remark}

 This paper is organized as follows. In Section 1, we establish and recall some  preliminary facts
 which are necessary to the proofs of our results. Section 2 is devoted to the proofs  of Theorem~\ref{Theo1}, Theorem~\ref{Theo2}, and Proposition~\ref{Pro-Ergodic}.  
 
 \medskip
 \noindent
\textbf{Acknowlegments} 
We are grateful to E.~Breuillard and Y. Guivarc'h for useful discussions related to the main theme of this article.
Thanks are also due to   V.~Guirardel   who suggested  the proof of Lemma~\ref{Lem-LinearVirtually}.

\section{Some preliminary results}
\label{S1}
\subsection{Reduction to the case of automorphisms}
\label{SS-Reduction}
Let $X$ be a  compact abelian group with normalized Haar measure $\mu$
and $\Ga$ a countable subgroup of $\Aff(X)$. 
The aim of this subsection is to reduce the study of the spectral gap property for 
 $\Ga\curvearrowright  (X,\mu)$ to that of the action  $\paut(\Ga)\curvearrowright  (X,\mu)$,
where $\paut: \Aff(X)\to \Aut(X)$ is the canonical projection.

Let $\widehat{X}$ be the Pontrjagin dual group of $X$, which is a discrete group.
The group $\Aut(X)$ acts by duality on  $\widehat{X}$, giving rise to a \emph{right} action
$\widehat{X}\times \Aut(X)\to \widehat{X}$ given by 
$$
\chi^\theta(x)= \chi(\theta(x)) \tout \theta\in \Aut(X), \chi\in \widehat{X}, x\in X.
$$
The Fourier transform $\F: L^2(X,\mu)\to \ell^2(\widehat{X})$, given by 
$$(\F f)(\chi)= \int_X f(x) \overline{\chi}(x)d\mu(x) \tout f\in L^2(X,\mu), \chi\in \widehat{X},$$ is a Hilbert space isomorphism.
The Koopman representation of $\Aff(X)$ on $L^2(X,\mu)$ corresponds under $\F$ to the unitary representation 
$\pi_X$ of $\Aff(X)$ on $\ell^2(\widehat X)$ given by 
$$
\pi_X(\ga)(\xi)(\chi)= \chi(x)\xi(\chi^{\theta})  \tout \xi\in \ell^2(\widehat X) , \chi\in \widehat{X}, \leqno{(*)}
$$
for  $\ga=(\theta, x)$ in  $\Aff(X)= \Aut(X)\ltimes X.$ 
 Observe that $L^2_0(X,\mu)$ corresponds under $\F$ to the subspace $\ell^2(\widehat{X}\setminus \{\Un_X\})$ of 
 $\ell^2(\widehat{X}).$ 
 
\begin{proposition}
\label{Pro-AnalysisKoopmanAffine}
Let $X$ be a compact  abelian group with normalized Haar measure $\mu$
and let $\Ga$ be a countable subgroup of $\Aff(X)$ such that the action $\Ga\curvearrowright  (X,\mu)$  does not have  the spectral gap property.
Then the action $\paut(\Ga)\curvearrowright  (X,\mu)$  does not have  the spectral gap property.
\end{proposition}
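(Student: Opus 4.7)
The plan is to transfer the almost-invariant vectors for $\Ga$ directly to almost-invariant vectors for $\paut(\Ga)$ by passing to absolute values on the Fourier side. The key point is that the cocycle-like part $\chi(x)$ in formula $(*)$ has modulus one, so taking $|\xi|$ kills the translation component and leaves only the permutation action coming from $\Aut(X)$.

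Concretely, suppose $\Ga\curvearrowright (X,\mu)$ has no spectral gap. Via the Fourier isomorphism $\F$, this means there exists a sequence of unit vectors $\xi_n\in\ell^2(\widehat X\setminus\{\Un_X\})$ with $\lim_n\|\pi_X(\ga)\xi_n-\xi_n\|=0$ for every $\ga\in\Ga$. Write $\ga=(\theta,x)$ with $\theta=\paut(\ga)$. Using $(*)$, one checks pointwise on $\widehat X$ that
$$
|\pi_X(\ga)\xi_n|(\chi)=|\chi(x)\,\xi_n(\chi^\theta)|=|\xi_n|(\chi^\theta)=\bigl(\pi_X(\theta)|\xi_n|\bigr)(\chi),
$$
since $|\chi(x)|=1$. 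In particular, $|\xi_n|$ is a unit vector in $\ell^2(\widehat X\setminus\{\Un_X\})$ (its support is contained in that of $\xi_n$, hence avoids $\Un_X$, and $\||\xi_n|\|=\|\xi_n\|=1$).

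Next I would invoke the elementary inequality $\bigl||a|-|b|\bigr|\leq|a-b|$ applied coordinate by coordinate:
$$
\bigl\||\xi_n|-\pi_X(\theta)|\xi_n|\bigr\|^2=\sum_{\chi\in\widehat X}\bigl(|\xi_n(\chi)|-|\pi_X(\ga)\xi_n(\chi)|\bigr)^2\leq\|\xi_n-\pi_X(\ga)\xi_n\|^2.
$$
Since the right-hand side tends to zero for every $\ga\in\Ga$, the same holds for the left-hand side, and thus $(|\xi_n|)_n$ is a sequence of almost invariant unit vectors in $\ell^2(\widehat X\setminus\{\Un_X\})$ for the restriction of $\pi_X$ to $\paut(\Ga)\subset\Aut(X)$. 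Translating back through $\F$, this exhibits almost invariant unit vectors in $L^2_0(X,\mu)$ for the Koopman representation of $\paut(\Ga)$, so $\paut(\Ga)\curvearrowright(X,\mu)$ does not have the spectral gap property.

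There is no real obstacle here: the whole argument relies only on the factorization of formula $(*)$ as a unitary multiplication by the phase $\chi(x)$ composed with the permutation $\chi\mapsto\chi^\theta$, together with the fact that the pointwise absolute value is a contraction on $\ell^2$. The only mild subtlety to be careful about is checking that $|\xi_n|$ still lies in $L^2_0$ (equivalently, vanishes at the trivial character), which is automatic since passing to $|\xi_n|$ does not enlarge the support.
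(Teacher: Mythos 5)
Your argument is correct and is essentially the paper's own proof: both pass to the Fourier side, replace $\xi_n$ by $|\xi_n|$, use that $|\chi(x)|=1$ so the translation part disappears, and apply the pointwise inequality $\bigl||a|-|b|\bigr|\leq|a-b|$ to get $\Vert\pi_X(\theta)|\xi_n|-|\xi_n|\Vert\leq\Vert\pi_X(\ga)\xi_n-\xi_n\Vert$. Nothing further is needed.
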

\begin{proof} 
We realize the Koopman representation $\pi_X$  on $\ell^2(\widehat{X})$ as above. 
Since $\Ga\curvearrowright  (X,\mu)$ does not have  the spectral gap property,  there exists a sequence $(\xi_n)_{n\geq 1}$ of unit vectors in $\ell^2(\widehat{X}\setminus \{\Un_X\})$ such that  $\lim_n\Vert  \pi_X(\ga)\xi_n-\xi_n\Vert=0$, that is, by Formula $(*),$
$$
\lim_n \sum_{\chi\in \widehat{X}} |\chi(x) \xi_n(\chi^\theta)- \xi_n(\chi)|^2 =0,
$$
for every $\ga=(\theta, x)\in \Ga.$

For $n\geq 1$, set $\eta_n=|\xi_n|.$ 
Then $\eta_n$ is a unit vector in $\ell^2(\widehat{X}\setminus \{\Un_X\})$
and, for  every $\ga=(\theta, x)\in \Ga,$ we have
$$
\begin{aligned}
\Vert \pi_X(\theta)\eta_n- \eta_n\Vert^2&= 
\sum_{\chi\in \widehat{X}}\left \vert |\xi_n(\chi^\theta)|- |\xi_n(\chi)|\right\vert^2\\
&= \sum_{\chi\in \widehat{X}}\left \vert |\chi(x)\xi_n(\chi^\theta)|- |\xi_n(\chi)|\right\vert^2\\
&\leq  \sum_{\chi\in \widehat{X}} |\chi(x)\xi_n(\chi^\theta)- \xi_n(\chi)|^2\\
&=\Vert \pi_X(\ga)\xi_n- \xi_n\Vert^2.
\end{aligned}
$$
Hence, $(\eta_n)$ is a sequence of almost $\pi_X(\paut(\Ga))$-invariant and so   $\paut(\Ga)\curvearrowright  (X,\mu)$  does not have  the spectral gap property.
 
\end{proof}

\subsection{Invariant means, invariant measures, and  linear actions}
\label{SS-Means}
Let $X$ be a locally compact topological space. A \textbf{mean} on $X$ is  positive linear functional $M$ on  the space $C^b(X)$ of continuous bounded functions on $X$  such that  $M({\Un}_X)=1.$
If $Y$ is another locally compact topological space and $\Phi: X\to Y$ a continuous map,
the pushforward $\Phi_*(M)$  of $M$ by $\Phi$  is the mean   on $Y$ given by 
$\Phi_*(M)(f)= M(f\circ \Phi)$ for $f\in C^b(Y).$

Let $\Ga$ be a group and  $\Ga\curvearrowright X$  an action of 
 $\Ga$ by homeomorphisms of $X$.
A $\Ga$-invariant mean on $X$ is a mean on $M$ which is invariant for the 
induced action of $\Ga$ on $C^b(X).$
The following  lemma is well-known and easy to prove.
\begin{lemma}
\label{Lem1}
Let $X,Y$ be respectively  a locally compact  space and a compact space.
Let  $\Ga\curvearrowright X$ and  $\Ga\curvearrowright Y$  be actions of 
the group $\Ga$ by homeomorphisms of $X$ and $Y$. 
Let  $\Phi: X\to Y$ a continuous $\Ga$-equivariant map. 
Assume that there exists $M$ be an invariant mean
on $X$. Then $\Phi_*(M)$ is given by integration against a $\Ga$-invariant probability measure $\mu$ on $Y$.
\end{lemma}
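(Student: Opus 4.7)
The plan is to verify directly that $\Phi_*(M)$ is a $\Ga$-invariant mean on $Y$ and then invoke the Riesz--Markov--Kakutani representation theorem, which applies because $Y$ is compact.

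First I would check that $\Phi_*(M)$ is a well-defined mean on $C^b(Y)$. For any $f \in C^b(Y)$, the function $f \circ \Phi$ lies in $C^b(X)$ because $\Phi$ is continuous and $f$ is bounded, so $M(f \circ \Phi)$ makes sense. Linearity and positivity of $\Phi_*(M)$ follow at once from the corresponding properties of $M$, and the normalization is immediate from $\mathbf{1}_Y \circ \Phi = \mathbf{1}_X$ together with $M(\mathbf{1}_X) = 1$.

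Next I would verify $\Ga$-invariance. For every $\ga \in \Ga$ and every $f \in C^b(Y)$, the $\Ga$-equivariance of $\Phi$ gives $(f \circ \ga) \circ \Phi = f \circ (\ga \circ \Phi) = f \circ (\Phi \circ \ga) = (f \circ \Phi) \circ \ga$, so that
\[
\Phi_*(M)(f \circ \ga) = M\bigl((f \circ \Phi) \circ \ga\bigr) = M(f \circ \Phi) = \Phi_*(M)(f),
\]
using the $\Ga$-invariance of $M$ in the middle equality.

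Finally, since $Y$ is compact, $C^b(Y) = C(Y)$ and the Riesz--Markov--Kakutani theorem identifies the positive, normalized linear functional $\Phi_*(M)$ with integration against a unique Borel probability measure $\mu$ on $Y$. The $\Ga$-invariance of the functional, together with uniqueness in Riesz representation applied to each $\ga_* \mu$, yields $\ga_* \mu = \mu$ for every $\ga \in \Ga$. There is no real obstacle here; the only subtle point worth flagging is the need for $Y$ to be compact, since on a merely locally compact $Y$ a mean on $C^b(Y)$ need not come from a Radon measure (a positive functional on $C_0(Y)$ would be required instead).
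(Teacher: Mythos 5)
Your proposal is correct and follows essentially the same route as the paper: push the mean forward, note that on the compact space $Y$ the resulting positive normalized functional is given by a Borel probability measure via Riesz representation, and transfer the $\Ga$-invariance from the functional to the measure. The extra details you supply (well-definedness of $\Phi_*(M)$, the equivariance computation, uniqueness in Riesz) are exactly the routine verifications the paper leaves implicit.
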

\begin{proof} 
Since $\Phi_*(M)$ is a positive linear functional on $C(Y)$ and since $Y$ is compact, there exists
 by the Riesz representation theorem a probability measure $\mu$ on $Y$
such that 
$$\int_{Y} f(x) d\mu(x) =\Phi_*(M)(f)  \tout f\in C(Y).$$
The measure $\mu$ is $\Ga$-invariant, since $\Phi_*(M)$ is $\Ga$-invariant.
\end{proof}    

Let $\kk$ be a local field (that is, a non discrete locally compact field)  and $V$ a finite dimensional vector space over $\kk$.
 Then $V$ is a locally compact vector space and  $GL(V)$ is a locally compact group,  for the topology inherited from $\kk$.
This is the only topology  on $GL(V)$ we will consider in the sequel (with the exception of the proof of Lemma~\ref{Lem-LinearVirtually}).

Every subgroup $\Ga$ of $GL(V)$ acts by homeomorphisms on the projective space $\PP(V).$
A crucial tool for our proof of Theorems~\ref{Theo1} and \ref{Theo2} is the consideration 
of   $\Ga$-invariant probability measures on $\PP(V),$ a theme which goes back to the proof of the Borel density theorem in  \cite{Furstenberg}.  The following proposition summarizes the main  consequences, as we will use them,  of the existence of such a measure. Variants of this proposition  appeared already  at several places (see for instance \cite{BachirYves},  \cite{Cornulier},  
\cite{FurmanShalom}), but not exactly in the form we need; so,  we will   briefly indicate  its proof. 

For a group $G,$ we denote by $[G,G]$ the commutator subgroup of $G.$
\begin{proposition}
\label{Pro-Furstenberg}
Let $V$ be a finite dimensional vector space over  a local field $\kk$ and
$G$  a closed  subgroup of $GL(V)$.
Assume that there exists a $G$-invariant probability measure  on the Borel subsets of  $\PP(V)$
which is not supported on a proper projective subspace. Then there exists  a subgroup $G_0$ of $G$ of finite index such 
that $[G_0, G_0]$ is relatively compact  in $GL(V)$.  In particular,  the locally compact group $G$ is amenable.  
\end{proposition}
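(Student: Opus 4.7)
The plan is to show that, possibly after passing to a finite-index subgroup $G_0\subseteq G$, the image $\bar G_0$ of $G_0$ in $PGL(V)$ has compact closure. This reduction is enough: the preimage in $GL(V)$ of a compact subset of $PGL(V)$ is of the form $K\cdot Z$, with $K$ compact and $Z=\kk^{\times}\cdot\mathrm{Id}$ the centre, and since commutators kill $Z$ one has $[G_0,G_0]\subseteq [K,K]$, which is relatively compact in $GL(V)$. Amenability of $G$ then follows from the resulting ``compact-by-abelian-by-finite'' structure: the closure of $[G_0,G_0]$ is compact, $G_0/\overline{[G_0,G_0]}$ is abelian, and $G/G_0$ is finite; all three pieces are amenable and extensions of amenable groups are amenable.

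The central tool is the classical lemma of Furstenberg: if a probability measure $\nu$ on $\PP(V)$ satisfies $\nu(\PP(W))=0$ for every proper subspace $W\subsetneq V$, then its stabilizer in $PGL(V)$ is compact. This is proved via the $KAK$-decomposition of $PGL(V)$: any sequence $g_n\to\infty$ has Cartan parts that concentrate the mass of $g_n\nu$ on a flag of proper projective subspaces, so a weak-$\ast$ limit of $g_n\nu$ must sit on a proper subspace; invariance of $\nu$ then forces $\nu$ itself to do so, contradicting the hypothesis.

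My hypothesis on $\mu$ is strictly weaker---$\mu$ is not concentrated on a single proper subspace, but may have positive mass on several---so I reduce to Furstenberg's lemma by induction on $\dim V$, the case $\dim V=1$ being trivial as $PGL(V)$ is trivial. Let $c=\sup\{\mu(\PP(W)) : W\subsetneq V\}$. If $c=0$, Furstenberg applies directly to $\mu$ and $\bar G$ is compact. If $c>0$, I pick $W^{*}\subsetneq V$ achieving this supremum and of minimal dimension among such; minimality ensures that for $g\in G$ with $gW^{*}\neq W^{*}$ the intersection $gW^{*}\cap W^{*}$ has $\mu$-mass strictly less than $c$, and a total-mass argument bounded by $\mu(\PP(V))=1$ forces the $G$-orbit of $W^{*}$ to be finite. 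A finite-index subgroup $G_1\subseteq G$ stabilises $W^{*}$, and I then apply the inductive hypothesis both to the normalised restriction $\mu|_{\PP(W^{*})}$ on $\PP(W^{*})$ and to the pushforward to $\PP(V/W^{*})$ of $\mu|_{\PP(V)\setminus\PP(W^{*})}$ (which carries positive mass since $c<1$); by the minimality of $W^{*}$, neither of these measures sits on a proper subspace. Induction supplies a further finite-index subgroup whose image in $GL(W^{*})$ and in $GL(V/W^{*})$ each has relatively compact commutator.

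The main obstacle is the final patching step: controlling the two diagonal blocks does not automatically bound the off-diagonal (unipotent) block of the parabolic subgroup stabilising $W^{*}$. To bound this block, one must exploit the invariance of $\mu$ itself (and not merely its restriction and projection), typically through a further averaging argument or by applying a Furstenberg-type flag-lemma directly to the flag $0\subset W^{*}\subset V$. This is precisely the point where the existing formulations in \cite{BachirYves,Cornulier,FurmanShalom} diverge from one another, and where the form needed here requires a careful adaptation to the local-field setting.
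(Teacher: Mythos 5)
Your argument has a genuine gap, and you in effect name it yourself: the patching step for the flag $0\subset W^{*}\subset V$ is never carried out. Knowing that a finite-index subgroup has relatively compact commutator subgroup in $GL(W^{*})$ and in $GL(V/W^{*})$ only controls the diagonal blocks of the parabolic subgroup stabilizing $W^{*}$; the off-diagonal (unipotent) block of the commutators can perfectly well be unbounded, and nothing in your induction bounds it. Saying that one must ``exploit the invariance of $\mu$ itself, typically through a further averaging argument or a Furstenberg-type flag-lemma'' is a description of the missing step, not a proof of it, so the induction does not close. There is also a smaller gap earlier: with $c=\sup\{\mu(\PP(W)) : W\subsetneq V\}$ and $W^{*}$ a minimal-dimensional maximizer, the translates $gW^{*}$ meet pairwise in subspaces of mass $<c$ but not of mass zero, so the total-mass (inclusion--exclusion) count does not bound the orbit; the standard fix is to work with the minimal dimension in which some subspace has positive mass, so that distinct maximizers meet in $\mu$-null projective subspaces and the orbit has at most $1/c$ elements. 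Finally, your opening plan --- compactness of the image of a finite-index subgroup in $PGL(V)$ --- is unattainable under the stated hypothesis: the diagonal subgroup of $GL_2(\kk)$ preserves $\tfrac12\left(\delta_{[e_1]}+\delta_{[e_2]}\right)$, which is not supported on a proper projective subspace, yet no finite-index subgroup has relatively compact image in $PGL_2(\kk)$; you only use that plan in the case $c=0$, but it shows the induction cannot be run on that stronger statement.

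The paper's proof avoids the flag altogether, and that is exactly what removes the unipotent problem. Invoking the argument of Lemma 11 in \cite{BachirYves} (or Theorem 6.5.i in \cite{FurmanShalom}), it produces finitely many subspaces $V_1,\dots,V_r$ whose projective subspaces carry all of the invariant measure, together with a finite-index subgroup $G_0$ stabilizing each $V_i$ and having relatively compact image in $PGL(V_i)$. Since $[G_0,G_0]$ maps into $SL(V_i)$, its image in each $GL(V_i)$ is relatively compact; and because the measure is not supported on a proper projective subspace, the $V_i$ span $V$, so an element preserving every $V_i$ is controlled by its restrictions, whence $[G_0,G_0]$ is relatively compact in $GL(V)$ itself. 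No quotient $V/W^{*}$ ever appears, so there is no off-diagonal block to estimate. To complete your route you would have to prove the flag-lemma you allude to; as written, the proposal stops precisely where the real work lies.
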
 
\begin{proof} 
Let $\nu$ be $G$-invariant probability Borel measure on $\PP(V)$.
As in the proof  of  Lemma 11 in \cite{BachirYves} or of Theorem 6.5.i in \cite{FurmanShalom}, there exists
finitely many subspaces $V_1,\dots, V_r$ of $V$ and a subgroup $G_0$ of finite index in $G$ with the following properties:
\begin{itemize}
\item $\nu$ is supported by the union of the projective subspaces corresponding to the $V_i$'s; 
\item $G_0$ stabilizes  $V_i$  for every $i\in \{1,\dots, r\};$
\item  the image of $G_0$ in $PGL(V_i)$ is relatively compact for every $i\in \{1,\dots, r\}.$
\end{itemize}
Since the  image of  the commutator subgroup $[G_0, G_0]$ in $GL(V_i)$ is contained $SL(V_i)$,
it follows that the  image of  $[G_0, G_0]$ in $GL(V_i)$ is relatively compact for every $i\in \{1,\dots, r\}.$
As $\nu$ is not supported  on a proper projective subspace, the linear span of $V_1\cup \dots \cup V_r$
coincides with $V.$ 
This implies that $[G_0, G_0]$ is relatively compact in $GL(V).$ Therefore, $G_0$ and hence $G$ is amenable.
\end{proof}

 A further ingredient we will need  is the following result  which is Proposition 9 and Lemma 10 in \cite{BachirYves}; observe that, although only the case $\kk=\RRR$ is considered there,  the
 arguments for the proof  are valid without  change for any  local  field $\kk.$
 \begin{proposition}
 \label{Pro-AmenableImage}
 Let $V$ be a finite dimensional vector space over  a local field $\kk$ and $G$  a subgroup of $GL(V).$ 
  There exists a largest $G$-invariant linear subspace $V(G)$ of $V$
 such that the closure of the image of $G$ in $GL(V(G))$ is an amenable locally compact group. 
 Moreover, we have $\overline{V}(G)=\{0\}$ for   $\overline{V}= V/V(G)$.
 $\square$
 \end{proposition}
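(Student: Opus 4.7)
The plan is to establish the proposition in two steps: (i) show that the family of $G$-invariant subspaces whose action has amenable closure is stable under finite sums, yielding the largest element $V(G)$ by finite-dimensionality; (ii) rule out a nontrivial $\overline{V}(G)$ by means of an extension lemma for amenability.

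For (i), let $\mathcal{F}$ be the collection of $G$-invariant subspaces $W \subseteq V$ with $\overline{G|_W} \subseteq GL(W)$ amenable. The key lemma to establish is that $\mathcal{F}$ is closed under sums. Given $W_1, W_2 \in \mathcal{F}$, the natural homomorphism $\rho \colon G \to GL(W_1) \times GL(W_2)$, $g \mapsto (g|_{W_1}, g|_{W_2})$, takes values in the amenable locally compact group $\overline{G|_{W_1}} \times \overline{G|_{W_2}}$. The crux is to show that $\rho$ factors through a topological embedding of $\overline{G|_{W_1 + W_2}}$ into $GL(W_1) \times GL(W_2)$, so amenability transfers to the sum. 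Injectivity is immediate (an element is determined by its restrictions to $W_1$ and $W_2$), and the topological issues reduce to the finite-dimensional fact that on a spanning set pointwise convergence of linear maps agrees with convergence in $GL$; this also shows that any limit of $g_n|_{W_1 + W_2}$ still stabilizes $W_1$ and $W_2$ separately. Finite-dimensionality of $V$ then yields the largest element $V(G) \in \mathcal{F}$.

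For (ii), assume $\overline{V}(G) \neq \{0\}$, let $V' \subseteq V$ be its preimage in $V$, and aim to show $V' \in \mathcal{F}$. Let $P \subset GL(V')$ denote the closed subgroup stabilizing $V(G)$, and let $U \subset P$ be the closed normal subgroup of elements acting trivially on $V(G)$ and on $V'/V(G)$. A direct computation gives $U \cong \mathrm{Hom}_{\kk}(V'/V(G), V(G))$, which is abelian and hence amenable, while the restriction map induces (via the open mapping theorem for $\sigma$-compact locally compact groups) a topological isomorphism $P/U \cong GL(V(G)) \times GL(V'/V(G))$. The image of $H = \overline{G|_{V'}}$ in $P/U$ lies in $\overline{G|_{V(G)}} \times \overline{G|_{\overline{V}(G)}}$, which is amenable by the defining property of $V(G)$ and of $\overline{V}(G)$; its closure $Q$ is then a closed subgroup of an amenable group, hence amenable. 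The preimage $K = \pi^{-1}(Q)$ under $\pi \colon P \to P/U$ fits into a short exact sequence $1 \to U \to K \to Q \to 1$ of locally compact groups with both $U$ and $Q$ amenable, so $K$ is amenable; since $H \subseteq K$ is closed, $H$ is amenable and $V' \in \mathcal{F}$, contradicting the maximality of $V(G)$.

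The main obstacle I foresee is verifying carefully the topological embedding claim in step (i) — in particular, ensuring that amenability transfers cleanly from $\overline{G|_{W_1}} \times \overline{G|_{W_2}}$ to $\overline{G|_{W_1 + W_2}}$ via the restriction map and its inverse, and that limits of restrictions preserve each $W_i$. Everything else reduces to relatively standard structural properties of locally compact groups over a local field and the stability of amenability under closed subgroups, products, and extensions.
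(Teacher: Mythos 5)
Your argument is correct: stability of the family of invariant subspaces with amenable image-closure under sums (via the topological embedding of the stabilizer of $W_1$ and $W_2$ in $GL(W_1+W_2)$ onto a closed subgroup of $GL(W_1)\times GL(W_2)$), and the block-triangular extension argument, with abelian kernel $\mathrm{Hom}_{\kk}(V'/V(G),V(G))$, showing $\overline{V}(G)=\{0\}$. Note that the paper itself gives no proof but defers to Proposition 9 and Lemma 10 of \cite{BachirYves}, and your proof is essentially the same argument as the one given there, so nothing further is needed.
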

 
We will also need the following  (probably well-known) lemma, for which we could not find a reference.
Recall that a  group is \emph{linear}  if it can be embedded as a subgroup of $GL_n(\kk)$ for some field $\kk.$
\begin{lemma}
\label{Lem-LinearVirtually}
Let $\Ga$ be a linear group. 
Assume that $\Ga$ is finite-by-abelian (that is, $\Ga$ is a finite extension of an abelian group).
Then $\Ga$ is virtually abelian (that is, $\Ga$ is abelian-by-finite).
\end{lemma}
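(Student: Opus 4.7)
The plan is to exploit the linear structure via Zariski closure in order to transfer the problem to algebraic groups, where the hypothesis that the commutator subgroup is finite forces abelianness on the identity component.

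First I would observe that the hypothesis $\Ga$ finite-by-abelian means $[\Ga,\Ga]$ is contained in a finite normal subgroup $N$, so $[\Ga,\Ga]$ is \emph{finite}. Choosing an embedding $\Ga\subset GL_n(\kk)$ and passing to the algebraic closure, let $G$ denote the Zariski closure of $\Ga$ in $GL_n(\overline{\kk})$. Then $G$ is a linear algebraic group in which $\Ga$ is Zariski dense, and hence so is $\Ga\times \Ga$ in $G\times G$.

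The key step is to show that $[G,G]$ is itself finite. The commutator map $c\colon G\times G\to G$ is a morphism of varieties, and since the finite set $[\Ga,\Ga]$ is Zariski closed, $c^{-1}([\Ga,\Ga])$ is a Zariski closed subset of $G\times G$ containing the dense subset $\Ga\times\Ga$; therefore $c^{-1}([\Ga,\Ga])=G\times G$. Every commutator in $G$ thus lies in the subgroup $[\Ga,\Ga]$, and since $[\Ga,\Ga]$ is already a subgroup, it contains the subgroup $[G,G]$ generated by such commutators. In particular $[G,G]$ is finite.

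Now let $G^0$ be the Zariski identity component of $G$. It is a closed subgroup of finite index. A standard fact from the theory of linear algebraic groups (Humphreys, \S17.2) is that the commutator subgroup of a connected linear algebraic group is closed and connected; applied to $G^0$, this makes $[G^0,G^0]$ a Zariski closed connected subgroup of the finite group $[G,G]$, hence trivial. Therefore $G^0$ is abelian, and $\Ga\cap G^0$ is an abelian subgroup of finite index in $\Ga$, proving that $\Ga$ is virtually abelian.

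The main technical point to justify is that $[G^0, G^0]$ is Zariski closed (the invocation of the Humphreys result); everything else — density of $\Ga\times\Ga$ in $G\times G$, closedness of finite sets, and finiteness of components of an algebraic group — is routine. Note also that if one prefers to avoid passing to $\overline{\kk}$, one can work directly with the Zariski topology on $GL_n(\kk)$, but handling connected components is cleanest over the algebraic closure.
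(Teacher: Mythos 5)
Your proof is correct and follows essentially the same route as the paper: pass to the Zariski closure $G$ over an algebraically closed field, use density to show $[G,G]\subset[\Ga,\Ga]$ is finite, invoke Humphreys \S 17.2 to get $[G^0,G^0]$ connected hence trivial, and conclude that $\Ga\cap G^0$ is an abelian finite-index subgroup. The only difference is that you spell out the density/commutator-map argument that the paper leaves implicit, which is a fine (and routine) elaboration.
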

\begin{proof}
We may assume that  $\Ga$ is a subgroup of $GL_n(\kk)$ for an algebraically closed field $\kk.$

By assumption, there exists  a finite normal subgroup of $\Ga$
containing  $[\Ga, \Ga].$ In particular, $[\Ga, \Ga]$ is finite.

Let $G\subset GL_n(\kk)$ be the closure of $\Ga$ in the Zariski topology.
Since $[\Ga, \Ga]$ is finite, $[\Ga, \Ga]$ is a Zariski closed subgroup of $G.$ 
It follows that $[G,G]= [\Ga,\Ga]$ and hence that $[G,G]$ is finite. In particular, $[G^0, G^0]$ is finite,
where $G^0$ is the Zariski connected component  of $G.$ 
However, $[G^0, G^0]$ is connected (see Proposition 17.2 in \cite{Humphreys}).
Therefore, $[G^0, G^0]=\{e\},$ that is, $G^0$ is abelian.
Let $\Ga^0=\Ga \cap G^0$. Then $\Ga^0$ is a subgroup of finite index in $\Ga$
and $\Ga^0$ is abelian. 
\end{proof}
Observe that the previous lemma does not hold for non linear groups: 
 let $V$ be an infinite dimensional vector space over a finite field $\FF$ of characteristic different from $2$
  and $\omega: V \times V \rightarrow \FF$ a symplectic form on $V.$ 
 Let $\Ga$ be the associated  ``Heisenberg group", that is,    $\Ga= V \times \FF$ 
 with the law $(v, \lambda) (w,\beta) = (v+w, \lambda+\beta+\omega(v,w))$. Then $\Ga$ is finite-by-abelian but not virtually abelian. 
 
\subsection{The dual group of a solenoid and the ring of ad\`eles}
\label{SS-Dual}
Solenoids are characterized in terms of  their Pontrjagin dual groups as follows. 
Recall that the rank (also called Pr\"ufer rank) of an abelian group $A$ is the cardinality of a maximal linearly independent subset of $A$.
A compact abelian group $X$ is a  solenoid 
if and only if  $\widehat{X}$  is a finite rank, torsion-free, abelian group;
when this is the case,  the topological dimension of $X$ coincides with the rank of $\widehat{X}$
(see Theorem (23.18) in \cite{HeRo--63}). 

Let $X$ be a solenoid. 
  Let $d\geq 1$ be the rank of $\widehat{X}.$ Since  $\widehat{X}$ is torsion-free,
  $$V_\QQ:= \widehat{X} \otimes_\ZZ \QQ$$
is a $\QQ$-vector space of dimension $d$ and we may (and will) view $\widehat{X}$ as a subgroup of $V_\QQ$
via the embedding
$$
\widehat{X} \to   V_\QQ, \qquad \chi\mapsto \chi\otimes 1.
$$
(Since, obviously, every subgroup of $\QQ^d$ is  torsion-free abelian group of finite rank, we see  that the solenoids are exactly the  dual groups of subgroups of $\QQ^d$ for some $d\geq 1.$)

We will need to further embed $\widehat{X}$ in  vector spaces over various local fields. 
Let $\P$ be the set of primes of $\NN.$ Recall that,
for every  $p\in \P ,$ the additive group of the field $\QQ_p$ of $p$-adic numbers is a locally compact  group
containing the subring $\ZZ_p$ of $p$-adic integers as compact open subgroup.
The ring $\AA$ of ad\`eles of $\QQ$  is the restricted product  $\AA= \RRR\times \prod_{p\in \P} (\QQ_p, \ZZ_p)$
relative to the subgroups $\ZZ_p$; thus, 
$$\AA= \left\{(a_\infty, a_2, a_3, \cdots) \in \RRR\times \prod_{p\in \P} \QQ_p\mid a_p \in \ZZ_p \text{ for almost every } p\in \P \right\}.$$
The field $\QQ$ can be viewed as discrete and cocompact subring of the locally compact 
ring $\AA$ via the diagonal embedding
$$
\QQ\to \AA, \qquad q\mapsto (q, q, \dots).
$$
Set $\QQ_\infty := \RRR$ and for  $p\in \P\cup \{\infty\},$  set  
$$V_{p}=V_\QQ \otimes_\QQ \QQ_p.$$ 
 Then $V_p$ is a   $d$-dimensional vector space over $\QQ_p$  and
$V_\QQ$ can be viewed as a subspace of $V_p$ for every  $p\in \P\cup \{\infty\}.$

Fix  a basis $\B$ of $V_\QQ$ over $\QQ$ contained in $\widehat{X}.$  Then $\B$ is a basis of $V_p$
over $\QQ_p$ for every $p\in \P\cup \{\infty\}.$
For $p\in \P,$ let $\B_p$ be the $\ZZ_p$-module generated by 
$\B$ in $V_p$.  The restricted product  
$$V_\AA= V_{\infty} \times \prod_{p\in \P} (V_p, \B_p)$$
is a locally compact ring and $V_\QQ$ embeds diagonally 
 as a discrete and cocompact subgroup of $V_\AA$ 
 (for all this,  see Chap. IV in \cite{Weil}). 

As a result of this discussion,  we can view $\widehat{X}$ as a subgroup of 
$V_\QQ$ which is itself a discrete and cocompact subgroup of $V_\AA$.
Since the dual group of $V_\QQ\cong  \QQ^d$ may be identified 
with  $\AA^d/ \QQ^d$ (see Subsection~\ref{ProofTheo2}), observe that $X$  is a quotient of the full $d$-dimensional solenoid
$\AA^d/ \QQ^d$.

We discuss now the automorphisms of $\widehat{X}.$ 
Every $\theta \in \Aut(\widehat{X})$ extends, in a unique way, to an automorphism 
$\widetilde{\theta}$ of $V_\QQ$ defined by 
$$
\widetilde{\theta}( \chi\otimes n/m)= \theta(n\chi)\otimes (1/m) \tout \chi\in \widehat{X}, \, n/m\in \QQ.
$$
Therefore, we may identify $\Aut(\widehat{X})$ with a subgroup $GL(V_\QQ)$.
So, $\Aut(\widehat{X})$ embeds diagonally as a \emph{discrete} subgroup of the locally compact group $GL(V_\AA)\cong GL_d(\AA)$, which  is also  the restricted product
$$GL(V_\AA)=  GL(V_{\infty}) \times \prod_{p\in \P} (GL(V_p), GL(\B_p)).$$

Let $p_1,\cdots, p_r$ be   different primes and $a=p_1\dots p_2.$ Let 
$\Aut(\widehat{X})_{\ZZ[1/a]}$ be  the subgroup of all $\theta \in \Aut(\widehat{X})$
such that 
$$
\theta(\B_p)=\B_p \tout p\in \P \setminus \{p_1, \cdots, p_r\}.
$$ 
Then $\Aut(\widehat{X})_{\ZZ[1/a]}$ may be identified with a subgroup of $GL_d(\ZZ[1/a])$ and
embeds  diagonally as a \emph{discrete} subgroup of the locally compact group
$$
GL(V_{\infty}) \times GL(V_{p_1}) \times\cdots\times GL(V_{p_r}).
$$

\subsection{Dual of the $a$-adic solenoid}
\label{SS-DualSol}
Set $X:= \SS_a^d$ for a square free integer $a=p_1\dots p_r.$ 
Recall that $X= \AA_a^d/ \ZZ[1/a]^d,$ with $\ZZ[1/a]^d$ diagonally embedded in  
the locally compact ring
$$
\AA_a^d:= \RRR^d\times\QQ_{p_1}^d\times\cdots\times \QQ_{p_r}^d.
$$
We identify $\widehat{\RRR}$ with $\RRR$ via the map 
 $\RRR\to \widehat{\RRR}, y\mapsto e_y$ given by 
 $e_y(x)= e^{2\pi i xy}$  and $\widehat{\QQ_p}$ with 
$\QQ_p$  via the map  $\QQ_p\to \widehat{\QQ_p}, y\mapsto \chi_y$ given by 
 $\chi_y(x)= \exp(2\pi i  \{xy\}),$ where $\{x\}= \sum_{j=m}^{-1} a_j p^j $ denotes the ``fractional part" of a 
 $p$-adic number $x= \sum_{j=m}^\infty a_j p^j$ for integers $m\in \ZZ$ and $a_j \in \{0, \dots, p-1\}$
 (see \cite[Section D.4]{BHV}).
 
Then  $\widehat{\AA_a^d}$ is identified  with $\AA_a^d$ and 
$\widehat{X}$  with the annihilator of $\ZZ[1/a]^d$ in
$\AA_a^d,$ that is, with $\ZZ[1/a]^d$ embedded in 
$\AA_a^d$ via the map
$$
\ZZ[1/a]^d \to \AA_a^d, \qquad b\mapsto (b, -b \cdots,- b).
$$
Under this identification, the dual action of the automorphism group 
$$
\Aut(\AA_d^d)\cong  GL_d(\RRR)\times GL(\QQ_{p_1})\times\cdots\times GL(\QQ_{p_r})
$$
on  $\widehat{\AA_a^d}$  corresponds to the right action on  $\RRR^d\times\QQ_{p_1}^d\times\cdots\times \QQ_{p_r}^d$ given by 
$$
((g_\infty, g_1, \cdots, g_r), (a_\infty, a_1, \cdots, a_r))\mapsto  (g_\infty^t a_\infty , g_1^t a_1, \cdots, g_r^t a_r),
$$
where $(g,a)\mapsto ga$ is the usual (left) linear action of $GL_d(\kk)$ on $\kk^d$ for a field $\kk.$

\section{Proofs of Theorem~\ref{Theo1}, Theorem~\ref{Theo2}, and Proposition~\ref{Pro-Ergodic}}

\subsection{Proof of Theorem~\ref{Theo1}}
\label{ProofTheo1}
Proposition~\ref{Pro-AnalysisKoopmanAffine} shows that (i) implies (ii).
The fact that (iii) implies (i) follows a general result :
a measure preserving action  of a countable  amenable group on a non atomic probability $(Y,\nu)$ never has the  spectral gap property  (see \cite[Theorem 2.4]{JuRo} or \cite[(2.4) Theorem]{Schmidt}).
Since $\Ga$, which is isomorphic to  a subgroup of $GL_d(\QQ)$,
is a linear subgroup over a field of characteristic zero, (iii) implies (iv) by  one part of Tits' alternative theorem (\cite{Tits}).
As  (iii) is an obvious consequence of  (iv),  it remains  to show that (ii) implies (iii).
We will proceed  in several steps.

\vskip.2cm
$\bullet$ {\it First step.}    Assume that there exists $\paut(\Ga)$-invariant  proper subsolenoid $Y$   of $X$  such that the image $\Delta$ of  $\paut(\Ga)$ in $\Aut(X/Y)$ is amenable.
We claim that  the image of $\Ga$ in $\Aff(X/Y)$ is amenable.

\par
Indeed,  the image of $\Ga$ in $\Aff(X/Y)$ is a subgroup of 
$\Delta\ltimes (X/Y)$. Since $X/Y$ is abelian, $\Delta\ltimes (X/Y)$ is amenable (as discrete group) and the claim follows.

\vskip.2cm
In view of the first step, we may and will assume in the sequel that $\Ga\subset \Aut(X).$
By duality, we can also view $\Ga$ as a subgroup of $\Aut(\widehat{X}).$
In the sequel, we write  $0$ for the neutral element in $\widehat{X}$ instead of $\Un_X.$

\vskip.2cm
$\bullet$ {\it Second step.} We claim that there exists a $\Ga$-invariant mean  on $\widehat{X}\setminus \{ 0\}.$

\par

Indeed, since $\Ga\curvearrowright  (X,\mu)$  does not have a spectral gap, this follows by  a standard argument:
there  exists  a sequence $(\xi_n)_{n\geq 1}$ of unit vectors in $\ell^2(\widehat{X}\setminus \{ 0_X\})$ such that 
$$
\lim_n \Vert  \pi_X(\ga)\xi_n- \xi_n\Vert_2=0 \tout \ga\in \Ga,
$$
for the associated Koopman representation $\pi_X$ (see  Proof of Proposition~\ref{Pro-AnalysisKoopmanAffine}).
 Then $\eta_n:=|\xi_n|^2$ is a unit vector in $\ell^1(\widehat{X}\setminus \{0\})$  and 
$$
\lim_n \Vert  \pi_X(\ga)\eta_n- \eta_n\Vert_1=0 \tout \ga\in \Ga.
$$
Any  weak$^*$-limit  of $(\eta_n)_n$ in the dual space of $\ell^\infty(\widehat{X}\setminus \{0\})$ is a $\Ga$-invariant mean on $\widehat{X}\setminus \{0\}.$

\vskip.2cm
Let $d$ be the rank of $\widehat{X}.$ As in Subsection~\ref{SS-Dual}, we embed $\widehat{X}$ in the $d$-dimensional $\QQ$-vector space $V_\QQ= \widehat{X} \otimes_\ZZ \QQ$ as well as in the $d$-dimensional $\QQ_p$-vector spaces $V_{p}=V_\QQ\otimes_\QQ \QQ_p$ for $p\in \P \cup \{\infty\}$, where $\P$ is the set of primes and where $\QQ_\infty=\RRR$.
Accordingly, we identify $\Aut(\widehat{X})$ with a subgroup of $GL(V_\QQ)$.

\vskip.2cm 
We fix  a $\Ga$-invariant mean $M$ on $\widehat{X}\setminus \{0\},$ which we view as
mean on $\widehat{X}$ and write $M(A)$ instead of $M(\Un_A)$ for a subset 
$A$ of $\widehat{X}.$

\vskip.2cm 
$\bullet$ {\it Third step.}   Let $p\in \P \cup \{\infty\}.$  We claim  that 
$$M(\widehat{X}\cap V_p(\Ga)) =1,$$
 where $V_p(\Ga)$ is the $\Ga$-invariant linear subspace of $V_p$  defined  in  Proposition~\ref{Pro-AmenableImage}.

The proof of the claim is similar to the proof of Proposition 13 in \cite{BachirYves}; for the convenience of the reader,
we repeat  the main arguments. 
Assume, by contradiction, that $M(\widehat{X}\cap V_p(\Ga))<1.$
We therefore have
$$
t: =M(\widehat{X}\setminus V_p(\Ga))>0.
$$
Then  a $\Ga$-invariant mean $M_1$ is defined on $\widehat{X}\setminus V_p(\Ga)$
by 
$$
M_1(A)= \dfrac{1}{t} M(A) \tout A\subset \widehat{X}\setminus V_p(\Ga).
$$
Consider the quotient vector space   $\overline{V_p}= V_p/V_p(\Ga)$
with the induced $\Ga$-action.
The image of  $\widehat{X}\setminus V_p(\Ga)$ 
under the canonical projection $j: V_p \to \overline{V_p}$ does not contain $\{0\}.$ So, $\overline{M_1}:=j_*(M_1)$ is a $\Ga$-invariant mean on $\overline{V_p}\setminus\{0\}.$ 
By Lemma~\ref{Lem1}, the pushforward of $M_1$ on the projective $\PP(\overline{V_p})$  defines a $\Ga$-invariant probability measure $\nu$ on $\PP(\overline{V_p}).$

Let $\overline{W}$ be the  linear span of  the inverse image of $\supp(\nu)$ in $\overline{V_p}.$
Then $\overline{W}\neq \{0\}$ and  $\nu$ is not supported on a proper
projective subspace in $\PP(\overline{W}).$ Proposition~\ref{Pro-Furstenberg} shows that 
the closure of the image of $\Ga$ in $GL(\overline{W})$ is an amenable group. 
It follows that  $\overline{V_p}(\Ga)\neq \{0\}.$
This contradicts Proposition~\ref{Pro-AmenableImage}.

\vskip.2cm
Let $\P=\{p_1, p_2, p_3, \cdots \}$ be an enumeration of the set $\P$ of prime integers.
\vskip.2cm 
$\bullet$ {\it Fourth step.}   We claim that, for every $n\in  \NN,$ we have
 $$\widehat{X}\cap V_\infty(\Ga)\cap \bigcap_{i=1}^n V_{p_i}(\Ga) \neq \{0\}.$$
 
Indeed, by the third step, we have $M(\widehat{X}\setminus\{0\} \cap V_{p}(\Ga))=1$ for every 
$p\in \{p_1,\dots, p_r\} \cup \{\infty\}.$ By finite-additivity of $M,$ it follows that 
$$
M\left(\widehat{X}\setminus\{0\} \cap  V_\infty(\Ga)\cap \bigcap_{i=1}^n V_{p_i}(\Ga)\right)  =1;
$$
this proves the claim in particular.

\vskip.2cm
Fixing  a basis $\B$ of $V_\QQ$ over $\QQ$ contained in $\widehat{X}$,
and denoting by  $\B_p$  the $\ZZ_p$-module generated by 
$\B$ in $V_p$ for $p\in \P,$   we consider the  locally compact 
group $GL(V_\AA),$ which is the  restricted product 
of the $GL(V_p)$'s with respect to the compact groups $GL(\B_p)$'s 
(see Subsection~\ref{SS-Dual}).

For $p\in \P \cup \{\infty\},$ let $G_p$ denote the closure of the image 
of $\Ga$ in $GL(V_p(\Ga))$.
Set 
 $$
 G:= (G_\infty\times \prod_{p\in \P} G_p)\cap GL(V_\AA).
 $$
\vskip.2cm 
$\bullet$ {\it Fifth step.} We claim that $G$ is a closed amenable subgroup of 
$GL(V_\AA).$

Indeed, for every $n\geq 1,$ set
$$
H_n:= G_\infty\times \prod_{i=1}^n G_{p_i} \times K_n,
$$
where $K_n$ is the compact group $\prod_{i>n} (G_{p_i}\cap GL(\B_{p_i}).$
Then $(H_n)_n$ is an increasing sequence of  open subgroups of $G$ 
and $G=\bigcup_{n\geq 1} H_n$. Clearly, every $H_n$ is a closed subgroup 
of $GL(V_\AA).$ Hence, $G$ is a locally compact and therefore  a closed subgroup of $GL(V_\AA).$

 To show that $G$ is amenable, it suffices to show that every $H_n$ is amenable (see \cite[Proposition G.2.2]{BHV}).
 This is indeed the case, since every $G_p$ is amenable by definition of $V_p(\Ga)$ and since $K_n$ is 
 compact.
 
 \vskip.2cm 
For every $n\in \NN,$ denote by $W^n$ the  $\QQ$-linear span  of 
$$\widehat{X}\cap  V_\infty(\Ga)\cap \bigcap_{i=1}^n V_{p_i}(\Ga).$$

\vskip.2cm 
$\bullet$ {\it Sixth step.} We claim that there  exists $N\in \NN$ such that 
$W^n= W^N$ for every $n\geq N.$

Indeed, $(W^n)_{n\geq 1}$ is a decreasing sequence of linear subspaces of $V_\QQ$.
By the fourth step, we have $\dim_\QQ W^n>0$ for every $n\geq 1.$ Hence, there exists $N\in \NN$ such that 
$\dim_\QQ W^n= \dim_\QQ W^N$ for every $n\geq N$ and the claim is proved.

 \vskip.2cm 
 Set $W:= W^N$ and observe that $W$ is $\Ga$-invariant.
 \vskip.2cm 
$\bullet$ {\it Seventh step.} We claim that the image of $\Ga$ in $\Aut(\widehat{X}\cap W)$ is amenable. 

Indeed,  $W$ is a subspace of $V_\QQ$ and is contained in every $V_p(\Ga).$
On the one hand,  under the  diagonal embedding, $G\cap GL(W)$ is a discrete subgroup of $G,$ since the neighbourhood 
$$U \times \prod_{p\in P} (G_{p}\cap GL(\B_{p}))$$
of $e$ in  $G$ has trivial intersection with $GL(W),$ for a sufficiently small neighbourhood $U$
of $e$ in $G_\infty.$
On the other hand, $G\cap GL(W)$ is amenable, by the fifth step.
It follows that  the image $\widetilde{\Ga}\subset G\cap GL(W)$ of $\Ga$ in $GL(W)$  is amenable. 
The image of $\Ga$ in $\Aut(\widehat{X}\cap W)$ is a quotient of $\widetilde{\Ga}$
and  is therefore amenable.

\vskip.2cm 
Let
$$Y:= (\widehat{X}\cap W)^\perp= \left\{x\in X\mid \chi(x)=1 \tout \chi\in \widehat{X}\cap W\right\} $$ 
be the annihilator in $X$ of  the subgroup $\widehat{X}\cap W$ of  $ \widehat{X}.$

\vskip.2cm 
$\bullet$ {\it Eighth step.} We claim $Y$ is a  $\Ga$-invariant  proper subsolenoid   of $X$  and  that the image of $\Ga$ in $\Aut(X/Y)$  is amenable.

Indeed, $Y$ is clearly a closed  $\Ga$-invariant subgroup of $X$ and $Y\neq X$  since $\widehat{X}\cap W$ is non trivial, by the fourth step. 
Moreover, the dual group $\widehat{Y}$ of $Y,$ which is isomorphic to $\widehat{X}/(\widehat{X}\cap W)$,
is torsion free: if $\chi\in \widehat{X}$ is such that $n\chi\in W$ for some integer $n\geq 1,$
then $\chi\in W$, since $W$ is a $\QQ$-linear subspace.
As, obviously, $\widehat{Y}$ has finite rank, it follows that the compact group $Y$ is a solenoid.

By the seventh step,  the image of $\Ga$ in $\Aut(\widehat{X}\cap W)$ is amenable.
Since, $\Aut(X/Y)$ is isomorphic to $\Aut(\widehat{X}\cap W)$ by duality, it follows that the image of $\Ga$ in $\Aut(X/Y)$  is amenable. $\square$

\subsection{Proof of Theorem~\ref{Theo2}}
\label{ProofTheo2}
We only have to show that (i) implies (ii).
Set $X:= \SS_a^d$ for $a=p_1\dots p_r$
and let $\Ga$ be a subgroup of $\Aff(\SS_a^d).$ 
As in the proof of Theorem~\ref{Theo1}, we may assume that 
$\Ga\subset \Aut(X).$ 

Recall from Subsection~\ref{SS-DualSol}\,  that we may identify
 $\widehat{X}$  with the discrete subring $\ZZ[1/a]^d$ of 
 $$\AA_a^d=\RRR^d\times\QQ_{p_1}^d\times\cdots\times \QQ_{p_r}^d$$ and 
  $\Aut(\widehat{X})$ with  the discrete subgroup $GL_d(\ZZ[1/a])$
 of  $GL_d(\AA_a)$, with the dual action of $\ga\in  \Aut(X)$ on  $\AA_a^d$
 given by matrix transpose.

As  in the proof of Theorem~\ref{Theo1}, there exists a  $\Ga$-invariant mean $M$ on $\widehat{X}\setminus \{0\}.$
Let $W$ be a non zero $\QQ$-linear subspace of $V_\QQ= \widehat{X} \otimes_\ZZ \QQ$
of  minimal dimension with $M(W)=1.$
Then $W$ is $\Ga$-invariant, by  $\Ga$-invariance of $M$.
We claim that the image of $\Ga$ in $GL(W)$ is virtually abelian. 

Indeed, fix $p\in \{p_1,\dots, p_r\}\cup \{\infty\}$. Set 
$W_p= W\otimes_\QQ \QQ_p$ and let  $G_p$ be the closure of the image  of $\Ga$ in $GL(W_p).$
Let $\mu_p$ be the $G_p$-invariant probability measure on $\PP(W_p)$ 
which is the pushforward of $M$ under the map $W\setminus \{0\}\to \PP(W_p).$
 Then $\mu_p$  is  not supported on a proper projective subspace of $W_p:$ if $W'$ is a
 $\QQ_p$-linear subspace of $W_p$ with $\mu_p([W'])=1,$ where $[W']$ is the image of 
 $W'$ in $\PP(W_p),$ then $M(W'\cap W)=1$ and hence
$W'\cap W=W$, by minimality of $W$;  so $W'=W\otimes_\QQ \QQ_p=W_p.$ 

By Proposition~\ref{Pro-Furstenberg},  there exists therefore a finite index subgroup
$H_p$ of $G_p$ with a relatively  compact commutator subgroup $[H_p,H_p].$
 
 Set $G= G_\infty\times \prod_{i=1}^r G_{p_i}.$ 
As in the proof of Theorem~\ref{Theo1}, the image $\widetilde{\Ga}$ of $\Ga$ in $G$ is discrete.
Then $\widetilde{\Ga_0}:=\widetilde{\Ga}\cap \prod_{i=1}^r H_{p_i}$ is a subgroup of finite index in 
$\widetilde{\Ga}$ and its commutator $[\widetilde{\Ga_0},\widetilde{\Ga_0}]$ is finite.
Since $\widetilde{\Ga_0}\subset GL(W)$ is linear, it follows therefore from Lemma~\ref{Lem-LinearVirtually}
that $\widetilde{\Ga_0}$ and hence $\widetilde{\Ga}$ is virtually abelian. 
This concludes the proof of Theorem~\ref{Theo2}. $\square$

\begin{remark}
\label{Rem3}
Let $X$ be as in Theorem~\ref{Theo1} and  let $\Ga$ be finitely generated subgroup of $\Aut(X).$ 
We claim that there exists  finitely many   different primes $p_1,\cdots, p_r$ such that 
$\Ga$ is contained in the subgroup $\Aut(\widehat{X})_{\ZZ[1/a]}$ defined in Subsection~\ref{SS-Dual},
where $a=p_1\dots p_r.$ 

Indeed, let $\ga_1, \dots, \ga_n$ be a set generators of $\Ga.$ Let   $\B$ be a basis of $V_\QQ=\widehat{X}\otimes_\ZZ \QQ$ over $\QQ$ contained 
in $\widehat{X}$ . Then  every $\ga_i$ leaves invariant  the $\ZZ_p$-module $\B_p$  generated by 
$\B$ in $V_p=V_\QQ\otimes_\QQ \QQ_p$ for almost every prime $p$ and the claim follows.

Assume that Item (i) in  Theorem~\ref{Theo1} holds for the action of $\Ga$ on $X.$ 
The proof of Theorem~\ref{Theo2} shows that there exists a $\Ga$-invariant subspace $W$ of  $V_\QQ$ such that 
the image of $\Ga$ in $GL( W)$  is virtually abelian.
 Then  $Y=(\widehat{X}\cap W)^\perp$ is a subsolenoid in $X$ and the  image of $\Ga$ in $\Aut(X/Y)$  is virtually abelian.
   
\end{remark}

\subsection{Proof of Proposition~\ref{Pro-Ergodic}}
\label{SS-Pro-Ergodic}
Let $\Ga$ be a subgroup of $\Aff(X)$.

Assume that there exists a proper closed subgroup $Y$ such that 
 the image $\overline{\Ga}$ of $\Ga$ in $\Aff(X/Y)$  is  finite.
Since $X$ is compact and connected, $\overline{X}=X/Y$ is a non trivial compact connected 
group. It is easy to see that there exist two $\overline{\Ga}$-invariant non empty open subsets 
of $\overline{X}$ which are disjoint. The preimages $U$ and $V$ of  these sets in $X$ are $\Ga$-invariant 
non empty open subsets and are disjoint. 
Since the support of the  Haar measure 
$\mu$ of  $X$ coincides with $X$, we have $\mu(U)\neq 0$ and $\mu(V)\neq 0.$ 
Hence, $\Ga\curvearrowright  (X,\mu)$ is not ergodic.

Conversely, assume that $\Ga\curvearrowright  (X,\mu)$ is not ergodic.
Since $X$ is connected, $\widehat{X}$ is torsion free. As in the previous sections, we view $\widehat{X}$ as subgroup
of the (possibly infinite dimensional) $\QQ$-vector space  $V_\QQ=\widehat{X}\otimes_\ZZ \QQ$.
We realize the associated Koopman representation $\pi_X$ of $\Ga$ in $\ell^2(\widehat{X})$
as in Subsection~\ref{SS-Reduction}.
By non ergodicity of the action, there exists a non-zero $\Ga$-invariant vector $\xi\in \ell^2(\widehat{X}\setminus \{0\})$.
Thus, we have (see Formula $(*)$ from  Subsection~\ref{SS-Reduction}) 
$$
\chi(x)\xi(\chi^{\theta})=\xi(\chi)  \tout \xi\in \ell^2(\widehat X) , \chi\in \widehat{X},\leqno(**)
$$
for all $(\theta, x)\in \Ga.$

Set $\eta:=|\xi|.$ Then $\eta\neq 0$ and  Formula $(**)$  shows that  $\eta$ is $\paut(\Ga)$-invariant.
Let $\chi_0\in \widehat{X}\setminus \{0\}$ be such that $\eta(\chi_0)\neq 0.$ 
Since $\eta\in  \ell^2(\widehat{X})$ and since  $\eta\neq 0,$
it follows that the $\paut(\Ga)$-orbit is finite.

Let $W$ be the linear span of the  $\paut(\Ga)$-orbit of $\chi_0$ in $V_\QQ$
and  let $Y:= (\widehat{X} \cap  W)^\perp$ be the annihilator of $\widehat{X} \cap  W$  in  $X$.
Then $Y$ is a $\paut(\Ga)$-invariant closed subgroup of $X$ and $Y\neq X$ since $\chi_0\neq 0.$

Moreover, $\widehat{Y}\cong \widehat{X}/(\widehat{X}\cap W)$ is torsion free and hence $Y$ is connected: if $\chi\in \widehat{X}$ is such that $n\chi\in W$ for some integer $n\geq 1,$ then $\chi\in W$, since $W$ is a $\QQ$-linear subspace.

We claim that the image of $\Ga$ in $\Aff(X/Y)$ is a finite group. Indeed, since the $\paut(\Ga)$-orbit of $\chi_0$ is
finite, we can find a normal subgroup  $\Lambda$  in $\paut(\Ga)$ of finite index which fixes $\chi_0$.
Set 
$$\Ga_0:=\paut^{-1}(\Lambda) \cap \Ga.$$
 Then $\Ga_0$ is  a normal subgroup of finite index in $\Ga.$ 

 Let $\ga=(\theta, x)\in \Ga_0.$
Formula $(**)$ shows that $\chi(x)=1$ for every $\chi$ in the $\paut(\Ga)$-orbit of $\chi_0$ and hence for 
every $\chi\in \widehat{X} \cap  W.$ Using Formula $(*)$ from  Subsection~\ref{SS-Reduction}, it follows that
$\Ga_0$ acts trivially under the Koopman representation $\pi_{X/Y}$ on $\ell^2(\widehat{X/Y})=\ell^2( \widehat{X} \cap  W)$ associated to the  action  of $\Ga$ on $X/Y$. So, the image of $\Ga_0$ in $\Aff(X/Y)$ is trivial and therefore
the image of $\Ga$ in $\Aff(X/Y)$ is finite.$\square$

\end{document}